\documentclass[11pt]{article}
\usepackage{graphicx} 
\usepackage[utf8]{inputenc}
\usepackage{amsmath}
\usepackage{amssymb}
\usepackage{amsthm}
\usepackage{hyperref}
\newtheorem{theorem}{Theorem}[section]

\newtheorem{lemma}[theorem]{Lemma}
\newtheorem{fact}[theorem]{Fact}

\usepackage{booktabs}
\usepackage{adjustbox}
\usepackage{algorithm}
\usepackage{algpseudocode}
\usepackage{float}
\usepackage{xcolor}
\usepackage{comment}
\usepackage{dsfont}

\def\matrix0{{\mbox {\boldmath $O$}}}

\def\veca{{\mbox{\boldmath $a$}}}
\def\vecb{{\mbox{\boldmath $b$}}}
\def\vecc{{\mbox{\boldmath $c$}}}

\def\vecm{{\mbox{\boldmath $m$}}}

\def\vecy{{\mbox{\boldmath $y$}}}

\def\vec0{\mbox{\bf 0}}

\def\tr{\mathop{\rm tr }\nolimits}

\newcommand{\RR}{\mathds{R}}

\newcommand{\NN}{\mathds{N}}

\title{Optimization and complexity of inertia-type bounds on the independence and chromatic numbers of graph powers}
\author{
Aida Abiad
\thanks{\texttt{a.abiad.monge@tue.nl}, Department of Mathematics and Computer Science, Eindhoven University of Technology, The Netherlands}
\thanks{Department of Mathematics and Data Science, Vrije Universiteit Brussel, Belgium} 
\and
Stan van Hoesel \thanks{\texttt{s.vanhoesel@maastrichtuniversity.nl }, Department of Quantitative Economics, Maastricht University, The Netherlands} 
\and
Valentin Michaux
\thanks{\texttt{valentin.michaux@maastrichtuniversity.nl}, Department of Quantitative Economics, Maastricht University, The Netherlands} 
}

\date{}

\begin{document}

\maketitle

\begin{abstract}
The inertia bound, introduced by Cvetković in 1971, is a fundamental result in spectral graph theory that provides an upper bound for the independence number of a graph in terms of spectral information about a weighted adjacency matrix of the graph. Recently, this bound has been extended to the socalled inertia-type bounds for estimating the independence and chromatic numbers of graph powers ($k$-independence number and distance-$k$ chromatic number of a graph). These bounds have recently found applications in coding theory and quantum information theory.

The inertia-type bounds depend on the choice of a polynomial of degree $k$ and on the eigenvalues of the graph. Currently, optimizing these bounds requires solving several MILPs, which quickly becomes computationally demanding as the graph size or $k$ grows. This computational barrier is a major obstacle to the practical use of these bounds. Moreover, we have a limited theoretical understanding of their performance, even for small $k$. In this paper, we investigate their optimization and complexity. In particular, we improve the MILP formulations, reducing their computational burden and significantly decreasing the running time. Furthermore, we show that the optimization problems associated with the bounds are solvable in polynomial time for fixed $k$ and for small $k$.

\paragraph{Keywords:} $k$-Independence number; Distance-$k$ chromatic number; Eigenvalues; Mixed Integer Programming

\end{abstract}


\section{Introduction}


For a positive integer $k$, the $k^{\text{th}}$ \emph{power of a graph} $G =(V, E)$, denoted by $G^k$, is a graph with vertex set $V$ in which two distinct elements of $V$ are joined by an edge if there is a path in $G$ of length at most $k$ between them. This paper contributes towards the optimization and complexity analysis of inertia-type bounds related to graph powers. This class of graphs appears in several contexts, such as the theory of distance-regular graphs and coding theory. An important property is that these graphs are never locally sparse, and hence require a different approach.

In this paper we will mostly be concerned about the following two graph distance parameters:

\begin{itemize}
    \item The \emph{$k$-independence number} of a graph $G$, denoted $\alpha_k(G)$, is the maximum size of a set of vertices at pairwise distance greater than $k$. Equivalently, $\alpha_k(G) = \alpha_1(G^k)$.
    \item The \emph{distance-$k$ chromatic number} of a graph $G$, denoted $\chi_k(G)$, is the smallest number of colors required such that vertices at distance at most $k$ receive distinct colors. Equivalently, $\chi_k(G) = \chi_1(G^k)$. 
\end{itemize}

Note that upper bounds on the $k$-independence number directly yield lower bounds on the distance-$k$ chromatic number of a graph via the well-known inequality
\begin{equation}\label{lem:alpha_chi_relation}
    \chi_k(G) \geq \frac{|V|}{\alpha_k(G)}. 
\end{equation}

Despite the equivalence of the distance parameters with the classical ones via graph powers, even the simplest algebraic or combinatorial parameters (including the eigenvalues) of the power graph $G^k$ cannot be always deduced from the corresponding parameters of the graph $G$. In particular, the relation between $G$ and $G^k$ is not particularly algebraic, so their spectra are in general not related, see e.g. \cite{acfns2020,Das2013LaplacianGraph,JZ2017}. This motivates the appearance of eigenvalue bounds on $\alpha_k(G)$ and $\chi_k(G)$ that purely depend on the spectrum of the original graph $G$, see  \cite{acf2019,acfns2020,AM2025}. In particular, we focus on the following two inertia-type bounds:
\begin{itemize}
\item \emph{Inertia-type} upper bound on $\alpha_k(G)$ which first appeared in \cite{acf2019} and that was optimized in \cite{acfns2020}.
    \item \emph{Second inertia-type} lower bound on $\chi_k(G)$ which was presented and optimized in \cite{acfns2020}.
\end{itemize}

The above eigenvalue bounds are known to be sharp for several graph classes, for details see \cite{acfns2020}. In general, these bounds depend on a polynomial of degree $k$ (which essentially amounts to determining optimal matrix weights for a weighted adjacency matrix via linear combinations of powers of the adjacency matrix), together with the eigenvalues of the graph $G$. In general, the quality of such inertia-type bounds is governed by the choice of this degree-$k$ polynomial, so obtaining the best possible bound for a given graph becomes an optimization problem which was initially investigated in \cite{acfns2020} using Mixed Linear Integer Programming (MILP) techniques.

The considered inertia-type bounds have recently witnessed powerful applications in coding theory. Indeed, the most sought-after combination of properties is a code with a large size (that is, a large number of distinct codewords), and large differences between
those codewords, and it has been recently shown that inertia-type bounds are very useful for this purpose (see \cite{Abiad2024EigenvalueCodesb,APR2025}), especially when the graph underlying the graphical metric is irregular. Such eigenvalue bounds have also been recently investigated in the quantum information theory setting \cite{wea2020,AJ2025}. In particular, the inertia-type bounds we will further optimize are known to also be bounds for the corresponding quantum independence and chromatic parameters, see \cite{wea2020} and \cite{AJ2025,ELPHICK2019338} respectively. Regarding the quantum application, it is not known whether the quantum counterpart parameters of $\alpha_k(G)$ and $\chi_k(G)$ are computable functions. As a consequence of our results (and in particular when the inertia-type bounds hold with equality), we can use our bound optimization methods to establish the exact value of the quantum parameter for several graph classes (see \cite[Section 5]{wea2020} for more details), thus increasing the number of graphs for which the quantum parameter is known and contributing to the recent literature in this direction (see e.g. \cite{quantumchromatichamming,quantumchromatichammingscheme,quantumchromatichhadamard}).

Despite the promising applications that the inertia-type bounds have recently witnessed, the optimization and complexity of these inertia-type bounds remain poorly understood, even for small values of $k$. Furthermore, computing the optimal inertia-type bounds currently requires the solution of multiple MILPs  (see \cite{acfns2020}), which are known to be NP-hard. This paper contributes to improve the optimization problem associated with the bounds computation, and to investigate their complexity analysis. In particular, the two main contributions of this paper are as follows:

\begin{itemize}
    \item We improve the MILP formulations for two existing inertia-type bounds. For the bound on \(\alpha_k(G)\), we replace the
    family of vertex-dependent MILPs by a single equivalent formulation and show that, in the quadratic case, only the extremal closed-walk
    constraints are needed. For the second inertia-type bound on \(\chi_k(G)\), we replace the family of fixed-\(\ell\) MILPs by one unified formulation. These refinements preserve the bounds while substantially reducing the computational burden. See
    Lemma~\ref{lem:milp-equivalence},
    Theorem~\ref{thm:redundant-degree-two}, and
    Lemma~\ref{lem:second-milp-equivalence}.

    \item We study the complexity of the refined optimization problems
    themselves. We prove that, for both the inertia-type bound on
    \(\alpha_k(G)\) and the second inertia-type bound on \(\chi_k(G)\), the
    corresponding optimization problem is polynomial-time solvable for every
    fixed degree \(k\). The key observation is that, when \(k\) is fixed,
    the coefficient space of the polynomial has fixed dimension; hence the
    signs of the polynomial on the spectrum, together with the diagonal
    constraints, are determined by a hyperplane arrangement of polynomial
    size. We also give explicit algorithms for the low-degree cases
    \(k=1\) and \(k=2\). This fixed-\(k\) regime is particularly relevant
    for applications in error correction \cite{APR2025}, where one often
    fixes a prescribed minimum distance and seeks the maximum possible
    cardinality of a code. See
    Theorems~\ref{thm:fixed-k}-\ref{thm:second-k2}.
\end{itemize}

This paper is structured as follows. In Section~\ref{preliminaries}, we fix the notation and recall the inertia-type bounds for the \(k\)-independence number and the distance-\(k\) chromatic number, together with the existing MILP formulations used to compute them. In Section~\ref{sec:model-improvements}, we improve these MILPs formulations: for the bound on \(\alpha_k(G)\), we replace the family of vertex-dependent MILPs by a single equivalent model and identify redundant closed-walk constraints in the quadratic case; for the second inertia-type bound on \(\chi_k(G)\), we replace the family of fixed-\(\ell\) MILPs by one unified formulation. In Section~\ref{polynomial algorithm}, we study the computational complexity of the corresponding optimization problems. We prove that, for every fixed degree \(k\), both the optimization problem for the inertia-type bound on \(\alpha_k(G)\) and the one for the second inertia-type bound on \(\chi_k(G)\) are solvable in polynomial time, using hyperplane-arrangement arguments. We then give more explicit algorithms for the low-degree cases \(k=1\) and \(k=2\). The final Section \ref{sec:concludingremarks} contains some concluding remarks and open problems. 


\section{Preliminaries}
\label{preliminaries}

\subsection{Notation and definitions}

Throughout the paper, \(G=(V,E)\) denotes a finite simple undirected graph
with \(n=|V|\) vertices and adjacency matrix \(A\). We write
\(\operatorname{dist}_G(u,v)\) for the distance between two vertices
\(u,v\in V\), with distance \(+\infty\) if \(u\) and \(v\) lie in different
connected components. The degree of a vertex \(v\) is denoted by \(d(v)\),
and \(t(v)\) denotes the number of triangles containing \(v\). We use the
notation
\[
        [r,s]=\{r,r+1,\ldots,s\}
\]
for integers \(r\leq s\).

Vectors are denoted in bold. For instance, if
\[
        \operatorname{sp}G
        =
        \{\theta_0^{[m_0]},\theta_1^{[m_1]},\ldots,\theta_d^{[m_d]}\},
\]
then
\[
        \vecm=(m_0,\ldots,m_d)^\top
\]
denotes the vector of multiplicities. Similarly, the coefficient vector of
a polynomial \(p(x)=a_0+a_1x+\cdots+a_kx^k\) is denoted by
\[
        \veca=(a_0,\ldots,a_k)^\top .
\]

Since \(A\) is real symmetric, all its eigenvalues are real. We denote the
eigenvalues of \(A\), counted with multiplicity, by
\[
        \lambda_1\geq \lambda_2\geq \cdots \geq \lambda_n .
\]
When only the distinct eigenvalues are needed, we write
\[
        \operatorname{sp}G
        =
        \{\theta_0^{[m_0]},\theta_1^{[m_1]},\ldots,\theta_d^{[m_d]}\},
        \qquad
        \theta_0>\theta_1>\cdots>\theta_d,
\]
where \(m_j\) is the multiplicity of \(\theta_j\). Thus
\[
        \sum_{j=0}^{d}m_j=n.
\]
Unless stated otherwise, eigenvalue counts are always taken with
multiplicity. For example,
\[
        |\{i:p(\lambda_i)>0\}|
        =
        \sum_{\substack{0\leq j\leq d\\ p(\theta_j)>0}}m_j .
\]

For \(k\geq0\), let \(\RR_k[x]\) denote the set of real polynomials of
degree at most \(k\). If
\[
        p(x)=a_0+a_1x+\cdots+a_kx^k\in\RR_k[x],
\]
then
\[
        p(A)=a_0I+a_1A+\cdots+a_kA^k .
\]
In particular, the diagonal entries of \(p(A)\) are
\[
        (p(A))_{vv}
        =
        \sum_{i=0}^{k}a_i(A^i)_{vv},
        \qquad v\in V.
\]
We shall use the elementary identities
\[
        (A^0)_{vv}=1,\qquad
        A_{vv}=0,\qquad
        (A^2)_{vv}=d(v),\qquad
        (A^3)_{vv}=2t(v).
\]

For an integer \(k\geq0\), a graph \(G\) is called
\(k\)-partially walk-regular if, for every \(\ell=0,\ldots,k\), the number
of closed walks of length \(\ell\) rooted at \(v\) is independent of
\(v\). Equivalently,
\[
        (A^\ell)_{vv}
\]
is independent of \(v\) for every \(\ell=0,\ldots,r\). Consequently, if
\(G\) is \(k\)-partially walk-regular and \(p\in\RR_k[x]\), then the
diagonal of \(p(A)\) is constant and
\[
        (p(A))_{vv}
        =
        \frac{1}{n}\tr p(A)
        =
        \frac{1}{n}\sum_{i=1}^{n}p(\lambda_i)
        =
        \frac{1}{n}\sum_{j=0}^{d}m_jp(\theta_j)
\]
for every \(v\in V\).

\subsection{A spectral bound for the $k$-independence number}\label{sec:alpha-bound}

Let~$[2,n]=\{2,3,\dots,n\}$ and, given a polynomial~$p\in \RR_k[x]$ (that is, a polynomial of degree at most~$k$ with coefficients in~$\RR$), define parameters
\begin{align*}
W(p) &= \max_{u\in V}\ (p(A))_{uu},  &w(p) &= \min_{u\in V}\ (p(A))_{uu},\\
\Lambda(p) &= \max_{i\in[2,n]} p(\lambda_i), &\lambda(p) &= \min_{i\in[2,n]} p(\lambda_i).
\end{align*}
The classical inertia bound for the independence number of a graph by Cvetković is generalized to the~$k$-independence number as follows.

\begin{theorem}[\hspace{1sp}{\textbf{Inertia-type bound} \cite[Theorem 3.2]{acf2019}}]
\label{th:ind1:thmACF2019}
Let $k$ be a positive integer and let~$G$ be a graph with~$n$ vertices and adjacency eigenvalues
$\lambda_1\ge\cdots \ge \lambda_n$.
 Let~$p\in \RR_k[x]$ with corresponding parameters~$W(p)$ and $w(p)$. Then,
\begin{equation}\label{inertiatypeboundalphak}
\alpha_k(G)\le \min\{|i : p(\lambda_i) \ge w(p)| , |i : p(\lambda_i) \le W(p)|\}.
\end{equation}
\end{theorem}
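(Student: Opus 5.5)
We follow the classical Cvetković argument via Cauchy interlacing, applied to the symmetric matrix $p(A)$ instead of $A$. Let $U\subseteq V$ be a $k$-independent set with $|U|=\alpha_k(G)=:r$. The key observation is that for distinct $u,v\in U$ we have $\operatorname{dist}_G(u,v)>k$. Hence there is no walk of length $\ell\le k$ between $u$ and $v$, so $(A^\ell)_{uv}=0$ for all $\ell\in[0,k]$. Consequently $(p(A))_{uv}=\sum_{i=0}^k a_i (A^i)_{uv}=0$. The principal submatrix $M=p(A)[U,U]$ is therefore diagonal, with diagonal entries $(p(A))_{uu}\in[w(p),W(p)]$.

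Next we use the spectrum of $p(A)$. Since $A$ is real symmetric, $p(A)$ is real symmetric with eigenvalues $p(\lambda_1),\dots,p(\lambda_n)$, counted with multiplicity, as one sees by diagonalizing $A$ orthogonally. Let $\mu_1\ge\cdots\ge\mu_n$ denote these values sorted decreasingly; in general they are not in the order $p(\lambda_1),\dots,p(\lambda_n)$. The eigenvalues of $M$ are its diagonal entries, sorted as $\nu_1\ge\cdots\ge\nu_r$, and all of them lie in $[w(p),W(p)]$. Cauchy interlacing for the $r\times r$ principal submatrix $M$ gives $\mu_i\ge\nu_i\ge\mu_{n-r+i}$ for $i=1,\dots,r$.

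From the left inequality, $\mu_r\ge\nu_r\ge w(p)$, so at least $r$ eigenvalues of $p(A)$ are $\ge w(p)$. This gives $r\le|\{i:p(\lambda_i)\ge w(p)\}|$. From the right inequality with $i=1$, $\mu_{n-r+1}\le\nu_1\le W(p)$, so the $r$ values $\mu_{n-r+1},\dots,\mu_n$ are all $\le W(p)$. This gives $r\le|\{i:p(\lambda_i)\le W(p)\}|$. Taking the minimum of the two counts yields \eqref{inertiatypeboundalphak}.

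The only delicate points are two. First, the vanishing of the off-diagonal entries, which needs distance strictly greater than $k$ and the fact that $(A^\ell)_{uv}$ counts walks of length $\ell$; pairs in different components have distance $+\infty$ and are also fine. Second, the reindexing of the eigenvalues of $p(A)$, which interlacing requires to be in sorted order. The counts in the statement are taken with multiplicity and are invariant under this reindexing, so the argument goes through. The degenerate case $r=0$ is trivial.
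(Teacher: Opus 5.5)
Your proof is correct and is essentially the standard argument: the paper gives no proof of its own and simply cites \cite[Theorem 3.2]{acf2019}, where the bound comes from the same two facts you use. First, the principal submatrix of $p(A)$ indexed by a maximum $k$-independent set is diagonal with entries in $[w(p),W(p)]$; second, Cauchy interlacing against the sorted eigenvalues $p(\lambda_i)$ of $p(A)$ gives $\mu_r\ge w(p)$ and $\mu_{n-r+1}\le W(p)$.
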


If~$G$ is a~$k$-partially walk-regular graph, the diagonal of~$p(A)$ is constant for any polynomial~$p\in \RR_k[x]$ with entries
\[
(p(A))_{uu}=w(p)=W(p)=\frac{1}{n}\tr p(A)=\frac{1}{n}\sum_{i=1}^n  p(\lambda_i)
\]
for all~$u\in V$.
Then the inertia-type bound above becomes
\begin{equation}
\label{eq:thm1-pwr}
\textstyle
\alpha_k(G)\le \min\{|i : p(\lambda_i) \ge \frac{1}{n}\sum_{i=1}^n p(\lambda_i)| , |i : p(\lambda_i) \le \frac{1}{n}\sum_{i=1}^n  p(\lambda_i)|\}.
\end{equation}

\subsubsection*{MILP formulation of \eqref{inertiatypeboundalphak}}

The optimization of the bound \eqref{inertiatypeboundalphak} was shown in \cite{acfns2020}. 

Let~$p(x) = a_k x^k +\cdots + a_0$,~$\vecb=(b_0,\ldots,b_d) \in \{0,1\}^{d+1}$~and~$\vecm=(m_0,\ldots,m_d)\in\NN^{d+1}$. As observed in \cite{acfns2020}, the bound is invariant under replacing \(p\) by \(ap+b\), with \(a\neq 0\), so we may assume without loss of generality that \(w(p)=0\). As $w(p) = 0$, there exist a vertex~$u\in V(G)$ such that~$p(A)_{uu} = 0$. Moreover, every other vertex~$v$ must satisfy~$p(A)_{vv}\ge 0$. The following MILP, with variables~$a_0,\ldots,a_k$ and~$b_0,\ldots, b_d$, formulates the problem of finding the best polynomial for the bound in Equation~\eqref{inertiatypeboundalphak} under the assumption that~$w(p) = p(A)_{uu} = 0$. To obtain the best upper bound on~$\alpha_k$, we iterate over all vertices~$u \in V(G)$, solve the corresponding MILP and find the lowest objective value of all.

\begin{equation}
\boxed{\def\arraystretch{1.3}
\begin{array}{rll}
{\tt minimize} & \vecm^{\top} \vecb &\\
{\tt subject\ to} & \sum_{i = 0}^k a_i \cdot(A^i)_{vv} \geq 0, &v \in V(G)\setminus \{u\}\\
 & \sum_{i = 0}^k a_i\cdot (A^i)_{uu} = 0 &\\
 & \sum_{i = 0}^k a_i \theta_j^{\ i} - M b_j + \varepsilon \leq 0, &j = 0,\dots,d\quad (\ast)\\
 & \vecb \in \{0,1\}^{d+1}&
		\end{array}}
\label{MILP:ind1:inertia}
\end{equation}

The constant~$M$ in MILP formulation~\eqref{MILP:ind1:inertia} is a large number and~$\varepsilon > 0$ small. The value of each variable~$b_j$ represents whether~$p(\theta_j) \ge w(p) = 0$. Constraint~$(\ast)$ ensures that $b_j=1$ if~$p(\theta_j)\ge 0$, and since~$\vecm^\top \vecb$ is minimized in the objective function, $b_j=1$ only if~$p(\theta_j)\ge 0$. So, upon minimizing the weighted sum of~$b_j$'s, we are optimizing the corresponding bound~$\alpha_k\le \vecm^{\top} \vecb$.

\subsection{Spectral bounds for the distance-$k$ chromatic number}\label{sec:ind1:newboundschik}

Two inertia-type bounds for the distance-$k$ chromatic number of a graph were shown in \cite{acfns2020}. In this paper we focus on the second one, since it is a refinement of the first one, which follows trivially from the previous bound.

The first inertia-type lower bound on $\chi_k(G)$ follows immediately using \eqref{lem:alpha_chi_relation} and the inertia-type bound on $\alpha_k(G)$ from Theorem~\ref{th:ind1:thmACF2019}.

\begin{theorem}[\hspace{1sp}{\textbf{First inertia-type bound} \cite[Eq. (18)]{acfns2020}}]
Let $k$ be a positive integer and let~$G$ be a graph with~$n$ vertices and adjacency eigenvalues
$\lambda_1\ge\cdots \ge \lambda_n$.
 Let~$p\in \RR_k[x]$ with corresponding parameters~$W(p)$ and $w(p)$. Then,\begin{equation}
\label{eq:ind1:naiveirectlowerbound}
\chi_k(G) \ge \frac{n}{\min\{|i : p(\lambda_i) \ge w(p)| , |i : p(\lambda_i) \le W(p)|\}}.
\end{equation}
\end{theorem}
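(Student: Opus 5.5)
The statement to prove is the first inertia-type bound \eqref{eq:ind1:naiveirectlowerbound}. I would derive it directly from Theorem~\ref{th:ind1:thmACF2019} and the elementary inequality \eqref{lem:alpha_chi_relation}, so the argument is short and essentially bookkeeping.

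First, fix a positive integer $k$, the graph $G$ and a polynomial $p\in\RR_k[x]$. Write
\[
N(p)=\min\{|i : p(\lambda_i) \ge w(p)|,\ |i : p(\lambda_i) \le W(p)|\}.
\]
By Theorem~\ref{th:ind1:thmACF2019} we have $\alpha_k(G)\le N(p)$.

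Second, I check that $N(p)\ge 1$, so that the quotient $n/N(p)$ is well defined. Since $\tr p(A)=\sum_i p(\lambda_i)$, the average $\frac1n\sum_i p(\lambda_i)$ equals the average of the diagonal entries $(p(A))_{uu}$. This average lies in $[w(p),W(p)]$. Hence $\max_i p(\lambda_i)\ge w(p)$ and $\min_i p(\lambda_i)\le W(p)$, so both counts in $N(p)$ are at least $1$. Alternatively, this follows from $N(p)\ge\alpha_k(G)\ge 1$, since $n\ge1$ and any single vertex is a $k$-independent set.

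Third, I observe that \eqref{lem:alpha_chi_relation} holds for the following reason. In a distance-$k$ colouring, each colour class consists of vertices at pairwise distance greater than $k$, so it is a $k$-independent set and has size at most $\alpha_k(G)$. Covering all $n$ vertices therefore needs at least $n/\alpha_k(G)$ classes. Combining this with the first step gives
\[
\chi_k(G)\ge \frac{n}{\alpha_k(G)}\ge \frac{n}{N(p)},
\]
where the second inequality uses $0<\alpha_k(G)\le N(p)$.

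There is no real obstacle here. The only point needing care is the positivity of the denominator in the second step, which the trace-averaging argument (or the trivial bound $\alpha_k(G)\ge1$) settles.
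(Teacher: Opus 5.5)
Your proposal is correct and is the same argument the paper uses: the paper states that this bound follows immediately by combining \eqref{lem:alpha_chi_relation} with the inertia-type bound of Theorem~\ref{th:ind1:thmACF2019}. Your check that the denominator is positive, via the trace-averaging observation or via $\alpha_k(G)\ge 1$, is a detail the paper leaves implicit.
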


 For details on the optimization of the bound bound \eqref{eq:ind1:naiveirectlowerbound}, see \cite[Section 4.1.1]{acfns2020}.

For $k = 1$ and $p(A) = A$, the inequality \eqref{eq:ind1:naiveirectlowerbound}
was refined by Elphick and Wocjan  \cite[Theorem 1]{elphick}. A distance extension of the mentioned result by Elphick and Wocjan appeared in \cite{acfns2020} and can be seen as a strengthening of the previous bound \eqref{eq:ind1:naiveirectlowerbound}. Thus, we will focus on the following inertia-type bound for the distance-$k$ chromatic number.

\begin{theorem}[\hspace{1sp}{\textbf{Second inertia-type bound} \cite[Theorem 4.2]{acfns2020}}]\label{th:ind1:extended-EW}
Let~$G$ be a~$k$-partially walk-regular graph with adjacency matrix eigenvalues~$\lambda_1\geq \cdots \geq \lambda_n$. Let~$p\in \RR_k[x]$ such that~$\sum_{i=1}^{n} p(\lambda_i) = 0$. Then
\begin{equation}\label{eq:ind1:extendedElphickWocjan}
	\chi_k(G)\geq 1+\max{ \left(\frac{|i:p(\lambda_i)< 0|}{|i:p(\lambda_i)> 0|}\right)}.
\end{equation}
\end{theorem}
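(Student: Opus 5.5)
We prove the statement by adapting the Elphick--Wocjan argument from the matrix $A$ to the matrix $B=p(A)$, and then using partial walk-regularity to control its diagonal. Put $B=p(A)$. Then $B$ is real symmetric with eigenvalues $p(\lambda_1),\dots,p(\lambda_n)$. Let $\chi=\chi_k(G)$ and fix a proper colouring of $G^k$ with colour classes $V_1,\dots,V_\chi$.

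\emph{Structure of $B$.} Since $\deg p\le k$, the entry $(A^\ell)_{uv}$ with $\ell\le k$ vanishes whenever $\operatorname{dist}_G(u,v)>\ell$. Two distinct vertices in the same class $V_c$ are at distance greater than $k$, so every off-diagonal entry of $B$ inside the block $V_c\times V_c$ is zero. Each diagonal block is therefore diagonal. Because $G$ is $k$-partially walk-regular, every diagonal entry of $B$ equals $\frac1n\tr p(A)=\frac1n\sum_i p(\lambda_i)=0$. Hence $B$ has all diagonal blocks equal to zero with respect to the partition $V_1,\dots,V_\chi$.

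\emph{Unitary averaging.} Let $\omega=e^{2\pi i/\chi}$ and let $U$ be the diagonal unitary with $U_{vv}=\omega^c$ for $v\in V_c$. The sum $\sum_{s=0}^{\chi-1}U^sBU^{-s}$ multiplies the $(c,c')$ block by $\sum_s\omega^{s(c-c')}$. This factor is $\chi$ on diagonal blocks and $0$ off them. Since the diagonal blocks of $B$ are zero, we obtain
\[
\sum_{s=1}^{\chi-1}U^sBU^{-s}=-B.
\]

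\emph{Inertia counting.} Let $\pi$ and $\nu$ be the numbers of positive and negative eigenvalues of $B$, so $\pi=|i:p(\lambda_i)>0|$ and $\nu=|i:p(\lambda_i)<0|$. Each $U^sBU^{-s}$ is unitarily similar to $B$, so it has exactly $\pi$ positive eigenvalues. Consider the subspace $W=\bigcap_{s=1}^{\chi-1}\ker P_s$, where $P_s$ is the spectral projection of $U^sBU^{-s}$ onto its positive eigenspace. Then $\dim W\ge n-(\chi-1)\pi$, and on $W$ every summand $U^sBU^{-s}$ is negative semidefinite. Consequently $-B$ is negative semidefinite on $W$, i.e.\ $B$ is positive semidefinite on $W$.

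We now need a quantitative count. If $\dim W>n-\nu$, then $W$ meets the $\nu$-dimensional negative eigenspace of $B$ nontrivially, which contradicts $x^*Bx\ge0$ for $x\in W$. Therefore $n-(\chi-1)\pi\le\dim W\le n-\nu$, which gives $\nu\le(\chi-1)\pi$, that is, $\chi\ge1+\nu/\pi$.

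Applying the same argument to $-p$, which also satisfies the trace condition, gives $\chi\ge1+\pi/\nu$. Together these yield the maximum in \eqref{eq:ind1:extendedElphickWocjan}. The degenerate case $\pi=0$ cannot arise unless $B=0$, because $\tr B=0$; in that case the ratio is read conventionally.

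\emph{Main obstacle.} The delicate point is the inertia/Weyl-type counting in the third step. One must pass from a unitary sum identity to an eigenvalue-count inequality, and the intersection-of-subspaces argument above is the step I would verify carefully. It should be done over $\mathbb C$ with Hermitian forms, since $U$ is complex. An alternative is the Weyl inequality for sums of $\chi-1$ Hermitian matrices, applied to the count of positive eigenvalues of $-B$, which is $\nu$.
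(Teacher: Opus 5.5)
Your proof is correct and follows the standard route behind this cited result (the paper only quotes it from \cite{acfns2020} and gives no proof of its own): partial walk-regularity and the trace condition make $p(A)$ a zero-diagonal weighted adjacency matrix of $G^k$, and the Elphick--Wocjan unitary conversion $\sum_{s=1}^{\chi-1}U^s p(A)U^{-s}=-p(A)$, combined with a Weyl-type dimension count, gives $\nu\le(\chi-1)\pi$. Your subspace-intersection argument is exactly the special case of Weyl's inequality that is needed, and applying it to $-p$ gives the other ratio, so the step you flagged is sound and no gap remains.
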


\subsubsection*{MILP formulation of \eqref{eq:ind1:extendedElphickWocjan} }\label{subsec:ind1:optsecondinertialbound}
\label{sec:ind1:inertia2}

The optimization of the bound \eqref{eq:ind1:extendedElphickWocjan} was investigated in \cite{acfns2020}. As noted there, this bound is also invariant under multiplying \(p\) by a positive scalar, since the trace-zero condition and the signs of \(p\) on the spectrum are unchanged. Thus one can use MILPs to optimize the polynomial~$p$ in Theorem~\ref{th:ind1:extended-EW}. However, in this case we must solve~$n-1$ MILPs to obtain the best possible bound, whereas the first inertia-type bound only required one in case of $k$-partially walk-regularity. Let~$G$ have spectrum~$\operatorname{sp}G=\big\{\theta_0^{[m_0]},\ldots, \theta_d^{[m_d]}\big\}$ and let~$\vecm = (m_0,\dots,m_d)\in \NN^{d+1}$. For each~$\ell \in \{1,\dots,n-1\}$, we solve the following MILP. Note, however, that it may be infeasible for certain values of~$\ell$ if there is no subset of multiplicities adding up to~$\ell$.

\begin{equation}
\boxed{\def\arraystretch{1.3}
\begin{array}{rll}
{\tt maximize} & 1 + \frac{n-\vecm^{\top} \vecb}{\ell} \\
{\tt subject\ to} & \sum_{j=0}^d \sum_{i = 0}^k a_i m_j\theta_j^i = 0& \\
 & \sum_{i = 0}^k a_i \theta_j^i - M b_j+\varepsilon \leq 0,&j=0,\dots,d\\
 & \sum_{i = 0}^k a_i \theta_j^i - M c_j \leq 0, &j=0,\dots,d\\
  & \sum_{i = 0}^k a_i \theta_j^i + M (1-c_j) -\varepsilon \geq 0,&j=0,\dots,d\\
 & \vecm^\top \vecc = \ell & \\
 & \vecb \in \{0,1\}^{d+1}, \quad \vecc \in \{0,1\}^{d+1}&
\end{array}
}
\label{MILP:ind1:inertia2}
\end{equation}

As before, the variables~$a_i$ are the coefficients of the polynomial of degree at most~$k$,~$p(x) = a_k x^k +\cdots + a_0$, and the first constraint is the hypothesis of Theorem~\ref{th:ind1:extended-EW},~$\tr p(A) = 0$. The second set of constraints implies that~$b_j = 1$ if~$p(\theta_j) \ge 0$. Moreover, as the objective function minimizes~$\vecm^\top \vecb$, we do not have~$b_j=1$ unless it is forced by the constraints. Therefore,~$p(\theta_j) \ge 0$ if and only if~$b_j = 1$. Similarly, the third set of constraints implies that $c_j = 1$ if~$p(\theta_j) > 0$. Since, contrary to~$\vecm^\top \vecb$, the value of~$\vecm^\top \vecc$ is not minimized by the MILP (in fact, we assume it to be constant), we need to explicitly add the fourth set of constraints to ensure that also~$p(\theta_j) > 0$ whenever~$c_j = 1$. Note that this is a correction to MILP (27) in~\cite{acfns2020}, where these constraints are missing.

Summarizing the above, we have
\begin{itemize}
\item
$|i:p(\lambda_i)> 0|=\vecm^{\top}\vecc = \ell$ (fifth constraint),
\item
$|i:p(\lambda_i)= 0|=\vecm^{\top}(\vecb-\vecc)$,
\item
$|i:p(\lambda_i)< 0|=n-\vecm^{\top} \vecb$.
\end{itemize}
This means that an optimal solution of MILP~\eqref{MILP:ind1:inertia2} indeed corresponds to the maximum value for the bound in Theorem~\ref{th:ind1:extended-EW}.

\section{Improving the MILPs}
\label{sec:model-improvements}

The MILP formulations introduced in the previous section contain a large
amount of redundancy. Formulation~\eqref{MILP:ind1:inertia} for the inertia-type bound on \(\alpha_k(G)\) requires to solve one
MILP for each distinguished vertex \(u\in V(G)\), while formulation~\ref{MILP:ind1:inertia2} for the second inertia-type bound on \(\chi_k(G)\) requires one MILP for each value of the parameter \(\ell\). In both cases, these MILPs share most of their constraints.

In this section we show that one can avoid certain MILP redundancies. In particular, we first replace the vertex-dependent
formulation for \(\alpha_k(G)\) by a single equivalent MILP. We then show
that, for \(k=2\), most closed-walk constraints in this model are redundant.
Finally, we replace the fixed-\(\ell\) formulations for the second
inertia-type bound on \(\chi_k(G)\) by one unified MILP. We demonstrate that all three changes preserve the resulting bounds, but reduce the computational burden of the
models, substantially improving their running time.

\subsection{A single MILP for the inertia-type bound on \(\alpha_k(G)\)}

The formulation \eqref{MILP:ind1:inertia} computes the inertia-type bound on
\(\alpha_k(G)\) by fixing a distinguished vertex \(u\in V(G)\). For this
fixed vertex, the decision polynomial \(p\in\RR_k[x]\) is required to
satisfy
\[
        (p(A))_{uu}=0,
\]
while the remaining diagonal entries of \(p(A)\) are required to be
nonnegative. To obtain the best bound, one must therefore solve the model
for every choice of \(u\) and take the minimum objective value.

We now show that this family of vertex-dependent MILPs can be replaced by a
single equivalent formulation. Instead of prescribing which diagonal entry
of \(p(A)\) is equal to zero, we require all diagonal entries to be
nonnegative. The equality at some vertex will then be recovered by shifting
the polynomial by a constant. This leads to the following model:
\begin{equation}
\boxed{\def\arraystretch{1.3}
\begin{array}{rll}
{\tt minimize} & \vecm^\top \vecb &\\
{\tt subject\ to} & \sum_{i=0}^{k} a_i (A^i)_{vv} \geq 0, &v\in V(G)\\
 & \sum_{i=0}^{k} a_i \theta_j^i - b_j + \varepsilon \leq 0, &j=0,\ldots,d\\
 & b_j\in\{0,1\}, &j=0,\ldots,d
\end{array}}
\label{eq:newmilp}
\end{equation}
We refer to the original vertex-dependent formulation~\eqref{MILP:ind1:inertia} as
\(\textsc{MILP}\,1(u)\), and to \eqref{eq:newmilp} as
\(\textsc{MILP}\,2\). The new model no longer depends on \(u\), and hence
only one MILP has to be solved for each graph.

The following lemma shows that this simplification does not weaken the
bound.

\begin{lemma}
\label{lem:milp-equivalence}
Let \(\textsc{MILP}_{1,u}^{\ast}\) be the optimal value of
\(\textsc{MILP}\,1(u)\) for a fixed \(u\in V(G)\), and let
\(\textsc{MILP}_{2}^{\ast}\) be the optimal value of \(\textsc{MILP}\,2\).
Then
\[
        \operatorname{obj}\bigl(\textsc{MILP}_{2}^{\ast}\bigr)
        =
        \max_{u\in V(G)}
        \operatorname{obj}\bigl(\textsc{MILP}_{1,u}^{\ast}\bigr).
\]
Moreover, every feasible solution of \(\textsc{MILP}\,2\) can be transformed
into a feasible solution of \(\textsc{MILP}\,1(u)\) for some \(u\in V(G)\)
with the same objective value.
\end{lemma}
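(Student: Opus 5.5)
The plan is to compare the feasible regions of the two models directly, using the scale invariance noted in Section~\ref{sec:alpha-bound}: the bound is unchanged if $p$ is replaced by $ap+b$ with $a\neq 0$. I would prove the ``moreover'' part first and then derive the equality of optimal values from two inequalities. One step, matching the stated $\max$ rather than a $\min$, is where I expect real difficulty.

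\textbf{Transformation (MILP\,2 $\to$ MILP\,1$(u)$).} Take a feasible $(\veca,\vecb)$ of \textsc{MILP}\,2, with polynomial $p$. Every diagonal entry of $p(A)$ is nonnegative, so $w(p)\ge 0$. Choose $u$ with $(p(A))_{uu}=w(p)$ and set $\tilde p=p-w(p)$, which only shifts $a_0$. Then $(\tilde p(A))_{uu}=0$ and $(\tilde p(A))_{vv}\ge 0$ for all $v$. Since $\tilde p(\theta_j)\le p(\theta_j)$, every constraint $p(\theta_j)-b_j+\varepsilon\le 0$ survives. After rescaling $\tilde p$ by a positive factor (to pass between the constant $1$ of \textsc{MILP}\,2 and $M$ of \textsc{MILP}\,1$(u)$), the pair $(\tilde{\veca},\vecb)$ is feasible for \textsc{MILP}\,1$(u)$ with objective $\vecm^\top\vecb$. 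This proves the second sentence of the lemma, and gives
\[
\operatorname{obj}(\textsc{MILP}_2^\ast)\ \ge\ \operatorname{obj}(\textsc{MILP}_{1,u^\ast}^\ast)
\]
for the vertex $u^\ast$ produced from an optimal solution of \textsc{MILP}\,2.

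\textbf{Embedding (MILP\,1$(u)$ $\to$ MILP\,2).} Conversely, a feasible solution of \textsc{MILP}\,1$(u)$ satisfies all diagonal constraints. After scaling by $1/M$ (and adjusting $\varepsilon$), it is feasible in \textsc{MILP}\,2 with the same $\vecm^\top\vecb$. Hence
\[
\operatorname{obj}(\textsc{MILP}_2^\ast)\le \operatorname{obj}(\textsc{MILP}_{1,u}^\ast)\quad\text{for every } u,
\]
and in particular $\operatorname{obj}(\textsc{MILP}_2^\ast)\le\max_u \operatorname{obj}(\textsc{MILP}_{1,u}^\ast)$.

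\textbf{Main obstacle.} The two inequalities above pin $\operatorname{obj}(\textsc{MILP}_2^\ast)$ between $\operatorname{obj}(\textsc{MILP}_{1,u^\ast}^\ast)$ and every $\operatorname{obj}(\textsc{MILP}_{1,u}^\ast)$. So they identify it with the optimum attained at the vertex where $w(p)$ is realized. To reach the stated $\max$, I would still need $\operatorname{obj}(\textsc{MILP}_2^\ast)\ge \operatorname{obj}(\textsc{MILP}_{1,u}^\ast)$ for every $u$, not just for $u^\ast$. My first attempt would be to show that an optimal \textsc{MILP}\,2 polynomial can be shifted to vanish at any prescribed vertex $u$ without increasing the count. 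This fails when $(p(A))_{uu}>w(p)$: shifting by $(p(A))_{uu}$ makes some other diagonal entries negative.

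I therefore expect this direction to hold only when all the values $\operatorname{obj}(\textsc{MILP}_{1,u}^\ast)$ coincide, for instance for $k$-partially walk-regular graphs, where the diagonal of $p(A)$ is constant. In that case the two inequalities give the equality with the maximum directly. In general the argument yields equality with the value at $u^\ast$, which is the minimum over $u$, and I would have to check whether the statement's $\max$ is meant in this sense (the best, i.e.\ smallest, bound).
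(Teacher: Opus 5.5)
Your two constructions are the same as the paper's. A feasible solution of $\textsc{MILP}\,1(u)$ is embedded in $\textsc{MILP}\,2$. A feasible solution of $\textsc{MILP}\,2$ is shifted down by $\gamma=\min_{v}(p(A))_{vv}\geq 0$, which makes it feasible for $\textsc{MILP}\,1(\widehat u)$ with $\widehat u$ a minimizing vertex. Your proof of the ``moreover'' part is correct, and so is your accounting of what each construction yields.

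The paper reads both constructions in the opposite direction. From the embedding it concludes $\operatorname{obj}(\textsc{MILP}_2^\ast)\geq\max_u\operatorname{obj}(\textsc{MILP}_{1,u}^\ast)$, and from the shift it concludes $\leq\max_u$. For minimization problems, however, the embedding gives $\operatorname{obj}(\textsc{MILP}_2^\ast)\leq\min_u\operatorname{obj}(\textsc{MILP}_{1,u}^\ast)$. The shift, applied to an optimal solution, gives $\operatorname{obj}(\textsc{MILP}_2^\ast)\geq\operatorname{obj}(\textsc{MILP}_{1,\widehat u}^\ast)$. So what is actually proved, by the paper's argument as much as by yours, is $\operatorname{obj}(\textsc{MILP}_2^\ast)=\min_u\operatorname{obj}(\textsc{MILP}_{1,u}^\ast)$. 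This matches Section~\ref{sec:alpha-bound}, where the best bound is taken as the lowest objective value over all $u$. Your resolution of the ``main obstacle'' is therefore right: the $\max$ in the statement should be a $\min$.

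Your suspicion that the $\max$ version fails in general is also correct. Take $G=K_{1,3}$ and $k=2$, with spectrum $\{\sqrt{3}^{[1]},0^{[2]},(-\sqrt{3})^{[1]}\}$.

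\textbf{Leaf.} For $u$ a leaf, take $p(x)=x^2+2x-1$. Then $(p(A))_{vv}=d(v)-1\geq 0$, with equality at the leaves, and $p$ is nonnegative only at $\sqrt{3}$. So the optimal value is $1$. The same value $1$ is optimal for $\textsc{MILP}\,2$: nonnegative diagonal entries force $\operatorname{tr}p(A)\geq 0$, so some eigenvalue always has $p\geq 0$.

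\textbf{Center.} For $u$ the center, the constraints force $a_0=-3a_2$ and $a_2\leq 0$. Hence $p(0)=-3a_2\geq 0$, contributing weight $2$. Also $p(\pm\sqrt{3})=\pm\sqrt{3}\,a_1$, so one of these two values is $\geq 0$. The optimal value is therefore $3$.

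Thus $\operatorname{obj}(\textsc{MILP}_2^\ast)=1\neq 3=\max_u\operatorname{obj}(\textsc{MILP}_{1,u}^\ast)$.

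One further point in your favour concerns the constants. The paper's embedding step ignores that $\textsc{MILP}\,1(u)$ uses $M$ while $\textsc{MILP}\,2$ uses $1$; this is only addressed in the remark after the lemma. Your rescaling by $1/M$, with $\varepsilon\mapsto\varepsilon/M$, handles it. In the other direction no rescaling is needed once $M\geq 1$, since $\widetilde p(\theta_j)\leq b_j-\varepsilon\leq Mb_j-\varepsilon$.
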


\begin{proof}
Fix \(u\in V(G)\), and let \((\veca,\vecb)\) be feasible for
\(\textsc{MILP}\,1(u)\). Then
\[
        \sum_{i=0}^{k} a_i(A^i)_{uu}=0
\]
and
\[
        \sum_{i=0}^{k} a_i(A^i)_{vv}\geq 0
        \qquad \text{for every } v\in V(G)\setminus\{u\}.
\]
Hence all diagonal entries of \(p(A)\) are nonnegative. Therefore
\((\veca,\vecb)\) is feasible for \(\textsc{MILP}\,2\) and has the same
objective value. This shows that
\[
        \operatorname{obj}\bigl(\textsc{MILP}_{2}^{\ast}\bigr)
        \geq
        \max_{u\in V(G)}
        \operatorname{obj}\bigl(\textsc{MILP}_{1,u}^{\ast}\bigr).
\]

Conversely, let \((\veca,\vecb)\) be feasible for \(\textsc{MILP}\,2\), and set
\[
        p(x)=a_0+a_1x+\cdots+a_kx^k .
\]
Define
\[
        \gamma =
        \min_{v\in V(G)}
        \sum_{i=0}^{k} a_i(A^i)_{vv}.
\]
By the diagonal constraints of \(\textsc{MILP}\,2\), we have
\(\gamma\geq 0\). Now define
\[
        \widetilde p(x)=p(x)-\gamma,
\]
or equivalently, define \(\widetilde{\veca}\) by
\[
        \widetilde a_0=a_0-\gamma,
        \qquad
        \widetilde a_i=a_i \quad \text{for } i=1,\ldots,k.
\]
By construction,
\[
        \min_{v\in V(G)}
        \bigl(\widetilde p(A)\bigr)_{vv}=0.
\]
Choose
\[
        \widehat u\in
        \arg\min_{v\in V(G)}
        \bigl(\widetilde p(A)\bigr)_{vv}.
\]
Then
\[
        \bigl(\widetilde p(A)\bigr)_{\widehat u\widehat u}=0
\]
and
\[
        \bigl(\widetilde p(A)\bigr)_{vv}\geq 0
        \qquad \text{for every } v\in V(G)\setminus\{\widehat u\}.
\]
Thus the closed-walk constraints of \(\textsc{MILP}\,1(\widehat u)\) are
satisfied.

It remains to verify the spectral constraints. Since \(\gamma\geq 0\), for
every \(j=0,\ldots,d\),
\[
        \widetilde p(\theta_j)
        =
        p(\theta_j)-\gamma
        \leq
        p(\theta_j).
\]
Therefore, whenever
\[
        p(\theta_j)-b_j+\varepsilon\leq 0,
\]
we also have
\[
        \widetilde p(\theta_j)-b_j+\varepsilon\leq 0.
\]
Hence \((\widetilde{\veca},\vecb)\) is feasible for
\(\textsc{MILP}\,1(\widehat u)\). Since the objective depends only on
\(\vecb\), this feasible solution has the same objective value as
\((\veca,\vecb)\). Consequently,
\[
        \operatorname{obj}\bigl(\textsc{MILP}_{2}^{\ast}\bigr)
        \leq
        \max_{u\in V(G)}
        \operatorname{obj}\bigl(\textsc{MILP}_{1,u}^{\ast}\bigr).
\]
The two inequalities prove the desired result.
\end{proof}

We note that the big-\(M\) constant appearing in the original formulation~\eqref{MILP:ind1:inertia} can be removed after a suitable normalization of the coefficients of the polynomial and of the parameter \(\varepsilon\). Indeed, as mentioned in Section ~\ref{preliminaries}, polynomials are invariant under scaling, dividing each constraints by M leads to the same optimal solution with coefficient M times smaller. Thus, the model
\eqref{eq:newmilp} provides an equivalent formulation with fewer MILPs to
solve and without the additional numerical instability caused by the
presence of a large constant.

The reformulation~\eqref{eq:newmilp} reduces the number of MILPs from \(n\) to one while
preserving the bound. The full instance-by-instance results are given by Tables~\ref{tab:milp_times_k2} and~\ref{tab:milp_times_k3} in
Appendix~\ref{app:milp-tables}. On average, the aggregated formulation is
substantially faster for both \(k=2\) and \(k=3\).

\subsection{Redundant constraints}

We now consider a simple reduction of the closed-walk constraints appearing
in Model~\eqref{eq:newmilp}. Suppose first that the decision polynomial has
degree two ($k=2$) and is of the form
\[
        p(x)=a_0+a_1x+a_2x^2 .
\]
Then the corresponding diagonal constraints are
\[
        a_0 + (A^2)_{vv}a_2 \geq 0,
        \qquad v\in V(G).
\]
Thus, although one obtains one inequality for each vertex of \(G\), all
these inequalities involve only the two variables \(a_0\) and \(a_2\).
The following result shows that, in this case, only the two extremal values
of \((A^2)_{vv}\) are relevant.

\begin{theorem}
\label{thm:redundant-degree-two}
Let \(c_1,\ldots,c_m\in\mathbb{R}\), and set
\[
        c_{\min}=\min_{1\leq j\leq m} c_j,
        \qquad
        c_{\max}=\max_{1\leq j\leq m} c_j .
\]
Consider the system
\[
        a_0+c_j a_2\geq 0,
        \qquad j=1,\ldots,m,
\]
in the variables \((a_0,a_2)\in\mathbb{R}^2\). Then every inequality with
\(c_{\min}<c_j<c_{\max}\) is implied by the two inequalities
\[
        a_0+c_{\min}a_2\geq 0,
        \qquad
        a_0+c_{\max}a_2\geq 0 .
\]
\end{theorem}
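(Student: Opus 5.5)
The plan is to use convexity: write each intermediate coefficient $c_j$ as a convex combination of $c_{\min}$ and $c_{\max}$, and then combine the two extremal inequalities with the same weights. The key observation is that, for fixed $(a_0,a_2)$, the map $c\mapsto a_0+c\,a_2$ is affine in $c$. A nonnegative affine function on the two endpoints of an interval is nonnegative on the whole interval.

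Concretely, fix $j$ with $c_{\min}<c_j<c_{\max}$; in particular $c_{\min}<c_{\max}$, so the quotient below is well defined. Set
\[
        t=\frac{c_{\max}-c_j}{c_{\max}-c_{\min}}\in(0,1),
\]
so that $c_j=t\,c_{\min}+(1-t)\,c_{\max}$. Since $t+(1-t)=1$, we can also write $a_0=t\,a_0+(1-t)\,a_0$. This gives the identity
\[
        a_0+c_ja_2
        =
        t\bigl(a_0+c_{\min}a_2\bigr)+(1-t)\bigl(a_0+c_{\max}a_2\bigr).
\]
Assume $(a_0,a_2)$ satisfies the two extremal inequalities. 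Then the right-hand side is a combination of two nonnegative numbers with nonnegative weights $t$ and $1-t$, hence nonnegative. Therefore $a_0+c_ja_2\geq 0$, which is the claim.

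There is no real obstacle. The only points needing care are that $t$ is well defined (guaranteed by $c_{\min}<c_{\max}$, which is implied by the existence of a strictly intermediate $c_j$) and that $t\in[0,1]$. Both follow directly from $c_{\min}\leq c_j\leq c_{\max}$.

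In the setting of Model~\eqref{eq:newmilp} with $k=2$, take $c_v=(A^2)_{vv}=d(v)$. The term with $a_1$ drops out because $A_{vv}=0$. The result then says that only the constraints for a vertex of minimum degree and a vertex of maximum degree need to be kept. Vertices whose degree equals $c_{\min}$ or $c_{\max}$ give duplicate inequalities, so only one copy of each is needed.
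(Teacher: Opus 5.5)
Your proof is correct, but it takes a different route from the paper.

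The paper argues by a case split on the sign of $a_2$. If $a_2\geq 0$, the map $c\mapsto a_0+c\,a_2$ is nondecreasing, so $a_0+c_ja_2\geq a_0+c_{\min}a_2\geq 0$. If $a_2\leq 0$, the map is nonincreasing, and the $c_{\max}$ inequality gives the result. So in each case only one extremal inequality is used, and which one depends on the point $(a_0,a_2)$.

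You instead give a fixed nonnegative combination, with multipliers $t$ and $1-t$ independent of $(a_0,a_2)$, that reproduces the intermediate constraint exactly. This is a Farkas-type certificate of redundancy.

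The paper's argument is slightly more elementary. It needs no division by $c_{\max}-c_{\min}$ and also covers $c_j$ equal to an endpoint without comment. However, it relies on $c$ being one-dimensional, since it uses monotonicity along a line. Your argument uses only that $a_0+c\,a_2$ is affine in $c$, so it carries over verbatim to the $k=3$ situation discussed right after the theorem. There, a closed-walk constraint indexed by a point of $\mathbb{R}^2$ is implied by the others whenever that point lies in the convex hull of the other index points. This is exactly the mechanism behind the paper's remark that nonredundancy for $k=3$ arises when the index points are vertices of their convex hull.
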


\begin{proof}
Fix \(j\) with \(c_{\min}<c_j<c_{\max}\). Suppose first that \(a_2\geq 0\).
From \(a_0+c_{\min}a_2\geq 0\) we get
\[
        a_0\geq -c_{\min}a_2 .
\]
Since \(a_2\geq0\) and \(c_{\min}\leq c_j\), this implies
\[
        -c_{\min}a_2\geq -c_j a_2,
\]
and hence \(a_0+c_j a_2\geq0\).

If \(a_2\leq0\), the same argument uses the inequality
\(a_0+c_{\max}a_2\geq0\). Indeed,
\[
        a_0\geq -c_{\max}a_2
        \geq -c_j a_2,
\]
because \(c_j\leq c_{\max}\) and \(a_2\leq0\). Thus
\(a_0+c_j a_2\geq0\) in both cases.
\end{proof}

Applying the theorem with \(c_j=(A^2)_{vv}\), for \(v\in V(G)\), shows that
the \(n\) closed-walk constraints
\[
        a_0+(A^2)_{vv}a_2\geq0,
        \qquad v\in V(G),
\]
may be replaced by the two constraints
\[
        a_0+\min_{v\in V(G)}(A^2)_{vv}a_2\geq0,
        \qquad
        a_0+\max_{v\in V(G)}(A^2)_{vv}a_2\geq0 .
\]
Since \((A^2)_{vv}=d(v)\), these are simply the constraints corresponding
to the minimum and maximum degrees of \(G\).
\\

This reduction is special to the quadratic case. For \(k=3\), the diagonal
constraint has the form
\[
        a_0+d(v)a_2+2t(v)a_3\geq0,
\]
where \(d(v)\) is the degree of \(v\) and \(t(v)\) is the number of triangles
containing \(v\). Thus the constraints are indexed by points
\((d(v),2t(v))\in\mathbb{R}^2\). If these points are vertices of their
convex hull, the corresponding halfspaces can all be nonredundant.
Appendix~\ref{app:nonredundant-k3-example} gives a graph on 7 vertices
for which $n-1=6$ distinct closed-walk constraints are all nonredundant.

\subsection{A unified MILP for the second inertia-type bound on \(\chi_k(G)\)}
The second inertia-type bound depends on an integer parameter
\(\ell\in\{1,\ldots,n-1\}\). In formulation
\eqref{MILP:ind1:inertia2}, this parameter is fixed in advance. Hence, to
obtain the best lower bound, one has to solve one MILP for each admissible
value of \(\ell\) and then take the maximum of the resulting objective
values.

This is another source of unnecessary repetition. The polynomial constraints
are the same for all values of \(\ell\); only the cardinality constraint
on \(\vecc\) and the objective
\[
        1+\frac{n-\vecm^\top \vecb}{\ell}
\]
depend on \(\ell\). We therefore incorporate the choice of \(\ell\) directly
into the optimization problem.

For each \(\ell=1,\ldots,n-1\), introduce a binary selector variable
\(y_\ell\), where \(y_\ell=1\) means that the value \(\ell\) is chosen. We
also introduce a continuous variable \(t\), representing the ratio
\[
        t=\frac{n-\vecm^\top \vecb}{\ell}.
\]
Since \(0\leq n-\vecm^\top \vecb\leq n\) and \(\ell\geq1\), we have
\(0\leq t\leq n\). \\
The following formulation selects the best value of
\(\ell\) in a single MILP:
\begin{equation}
\boxed{\def\arraystretch{1.3}
\begin{array}{rll}
{\tt maximize} & 1+t &\\
{\tt subject\ to} & \sum_{j=0}^{d} m_j\sum_{i=0}^{k} a_i\theta_j^i = 0, &\\
 & \sum_{i=0}^{k} a_i\theta_j^i - b_j+\varepsilon \leq 0, &j=0,\ldots,d\\
 & \sum_{i=0}^{k} a_i\theta_j^i - c_j \leq 0, &j=0,\ldots,d\\
 & \sum_{i=0}^{k} a_i\theta_j^i+(1-c_j)-\varepsilon\geq0, &j=0,\ldots,d\\
 & \sum_{\ell=1}^{n-1} y_\ell = 1, &\\
 & \vecm^\top \vecc = \sum_{\ell=1}^{n-1} \ell y_\ell, &\\
 & 0\leq t\leq n, &\\
 & \ell t \leq n-\vecm^\top \vecb+\ell n(1-y_\ell), &\ell=1,\ldots,n-1\\
 & b_j, c_j\in\{0,1\}, &j=0,\ldots,d\\
 & y_\ell\in\{0,1\}, &\ell=1,\ldots,n-1
\end{array}}
\label{eq:newmilp-second-inertial}
\end{equation}
The constraint \(\sum_{\ell=1}^{n-1}y_\ell=1\) ensures that exactly one
value of \(\ell\) is selected. The equality
\[
        \vecm^\top \vecc=\sum_{\ell=1}^{n-1}\ell y_\ell
\]
then replaces the fixed constraint \(\vecm^\top \vecc=\ell\) from the original
formulation. Finally, the inequalities
\[
        \ell t \leq n-\vecm^\top \vecb+\ell n(1-y_\ell)
\]
linearize the relation \(t=(n-\vecm^\top \vecb)/\ell\): for the selected value of
\(\ell\), the inequality becomes
\[
        \ell t\leq n-\vecm^\top \vecb,
\]
while for all nonselected values it is inactive. We again justify the removal of the big-$M$ from formulation~\eqref{MILP:ind1:inertia2} using the scale invariance property of the bound 

The following lemma shows that formulation~\eqref{MILP:ind1:inertia2} is equivalent to formulation~\eqref{eq:newmilp-second-inertial}.

\begin{lemma}
\label{lem:second-milp-equivalence}
Let \(\textsc{MILP}_{\ell}^{\ast}\) be the optimal value of the original
formulation~\eqref{MILP:ind1:inertia2} for a fixed
\(\ell\in\{1,\ldots,n-1\}\), and let
\(\textsc{MILP}_{\mathrm{new}}^{\ast}\) be the optimal value of
\eqref{eq:newmilp-second-inertial}. Then
\[
        \operatorname{obj}\bigl(\textsc{MILP}_{\mathrm{new}}^{\ast}\bigr)
        =
        \max_{\ell=1,\ldots,n-1}
        \operatorname{obj}\bigl(\textsc{MILP}_{\ell}^{\ast}\bigr).
\]
\end{lemma}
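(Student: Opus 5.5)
We prove the equality by two inequalities, exhibiting explicit maps between feasible solutions of the fixed-\(\ell\) formulations~\eqref{MILP:ind1:inertia2} and of the unified formulation~\eqref{eq:newmilp-second-inertial}. We use the scale invariance noted in Section~\ref{preliminaries} to identify the big-\(M\) version with the normalized one (constant \(1\)). Scaling \((\veca)\) by \(1/M\), with \(\varepsilon\) scaled accordingly, preserves the trace-zero condition and the signs of \(p(\theta_j)\). Hence, for each fixed \(\ell\), the set of attainable pairs \((\vecb,\vecc)\) is the same whether the constant is \(M\) or \(1\). From now on we work with the normalized fixed-\(\ell\) model.

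\emph{Step 1 (\(\geq\)).} Fix \(\ell\) such that \(\textsc{MILP}_\ell\) is feasible, and take a feasible \((\veca,\vecb,\vecc)\). We set \(y_\ell=1\), \(y_{\ell'}=0\) for \(\ell'\neq\ell\), and \(t=(n-\vecm^\top\vecb)/\ell\). The polynomial constraints are identical in both models. We also have \(\sum y=1\) and \(\vecm^\top\vecc=\ell=\sum_{\ell'}\ell' y_{\ell'}\). Since \(0\le n-\vecm^\top\vecb\le n\) and \(\ell\ge 1\), we get \(0\le t\le n\).

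The linking constraint holds with equality for the selected \(\ell\). For \(\ell'\neq\ell\), its right-hand side is at least \(\ell' n\), while the left-hand side satisfies \(\ell' t\le \ell' n\). So the constraint is inactive, as required. The objective \(1+t\) equals the objective of \(\textsc{MILP}_\ell\) at this point. Taking the optimum over feasible points and then the maximum over \(\ell\) gives \(\operatorname{obj}(\textsc{MILP}^\ast_{\mathrm{new}})\ge\max_\ell\operatorname{obj}(\textsc{MILP}^\ast_\ell)\). Infeasible \(\ell\) are simply ignored in the maximum.

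\emph{Step 2 (\(\leq\)).} Take a feasible solution of~\eqref{eq:newmilp-second-inertial}. There is a unique \(\ell^\ast\) with \(y_{\ell^\ast}=1\), so \(\vecm^\top\vecc=\ell^\ast\). The linking constraint for \(\ell^\ast\) reads \(\ell^\ast t\le n-\vecm^\top\vecb\), so \(1+t\le 1+(n-\vecm^\top\vecb)/\ell^\ast\). Dropping \(\vecy\) and \(t\), the triple \((\veca,\vecb,\vecc)\) satisfies every constraint of \(\textsc{MILP}_{\ell^\ast}\). Hence its objective there is at least \(1+t\), which gives \(\operatorname{obj}(\textsc{MILP}^\ast_{\mathrm{new}})\le\operatorname{obj}(\textsc{MILP}^\ast_{\ell^\ast})\le\max_\ell\operatorname{obj}(\textsc{MILP}^\ast_\ell)\).

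\emph{Main obstacle.} The delicate points are bookkeeping rather than depth. First, the constraint \(t\le n\) and the slack \(\ell n(1-y_\ell)\) must make the nonselected linking inequalities truly inactive; this is checked in Step~1. Second, the removal of big-\(M\) must be justified by the scaling argument given at the start. I would state explicitly that the feasible \((\vecb,\vecc)\)-patterns are unchanged under scaling. This holds because \(|p(\theta_j)|\) can be made arbitrarily small relative to \(1\) while preserving signs, provided \(\varepsilon\) is chosen smaller than the smallest nonzero \(|p(\theta_j)|\) after scaling, exactly as in the original model. Finally, the maxima in the statement exist because the objectives take finitely many values, since \(\vecb\) and \(\ell\) range over finite sets.
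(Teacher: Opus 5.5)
Your proposal is correct and follows the paper's proof essentially verbatim. For $(\geq)$ you set $y_\ell=1$ and $t=(n-\vecm^\top\vecb)/\ell$; for $(\leq)$ you read off the unique selected $\hat\ell$ and use its linearization constraint to get $t\leq(n-\vecm^\top\vecb)/\hat\ell$. You also spell out two details the paper leaves implicit, both correctly: that the nonselected constraints are slack (right-hand side at least $\ell' n\geq\ell' t$), and that replacing $M$ by $1$ is justified by scaling, with $\varepsilon$ rescaled to $\varepsilon/M$.
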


\begin{proof}
Fix \(\ell\in\{1,\ldots,n-1\}\), and let \((\veca,\vecb,\vecc)\) be feasible for the
original formulation with this value of \(\ell\). Set
\[
        y_\ell=1,\qquad y_r=0 \quad (r\neq \ell),
        \qquad
        t=\frac{n-\vecm^\top \vecb}{\ell}.
\]
Then
\[
        \vecm^\top \vecc=\ell=\sum_{r=1}^{n-1} r y_r,
\]
and the linearization constraint corresponding to the selected value of
\(\ell\) holds with equality. All other linearization constraints are
inactive because \(y_r=0\). Hence \((\veca,\vecb,\vecc,\vecy,t)\) is feasible for
\eqref{eq:newmilp-second-inertial} and has the same objective value. This
shows that
\[
        \operatorname{obj}\bigl(\textsc{MILP}_{\mathrm{new}}^{\ast}\bigr)
        \geq
        \max_{\ell=1,\ldots,n-1}
        \operatorname{obj}\bigl(\textsc{MILP}_{\ell}^{\ast}\bigr).
\]

Conversely, let \((\veca,\vecb,\vecc,\vecy,t)\) be feasible for
\eqref{eq:newmilp-second-inertial}. Since exactly one selector variable is
equal to \(1\), there is a unique
\(\hat{\ell}\in\{1,\ldots,n-1\}\) such that \(y_{\hat{\ell}}=1\). The
cardinality constraint gives
\[
        \vecm^\top \vecc=\hat{\ell}.
\]
Thus \((\veca,\vecb,\vecc)\) satisfies the original formulation with
\(\ell=\hat{\ell}\). Moreover, the linearization constraint for
\(\hat{\ell}\) gives
\[
        t\leq \frac{n-\vecm^\top \vecb}{\hat{\ell}}.
\]
Therefore the objective value of the unified formulation is no larger than
the objective value of the original formulation for the selected value
\(\hat{\ell}\). Hence
\[
        \operatorname{obj}\bigl(\textsc{MILP}_{\mathrm{new}}^{\ast}\bigr)
        \leq
        \max_{\ell=1,\ldots,n-1}
        \operatorname{obj}\bigl(\textsc{MILP}_{\ell}^{\ast}\bigr).
\]
The two inequalities prove the claim.
\end{proof}

We note that the explicit bound \(0\leq t\leq n\) is redundant for the optimal value, since the selected linearization constraint already implies
\[
        t\leq \frac{n-\vecm^\top \vecb}{\hat{\ell}}\leq n.
\]
Nevertheless, keeping this bound gives the solver a bounded interval for
\(t\), which improves the numerical behavior of the formulation.

\section{A polynomial-time algorithm}
\label{polynomial algorithm}
The previous section improved the MILP formulations used to compute the two main
existing inertia-type bounds. We now study the complexity of the underlying 
optimization problems of computing these bounds. The main point is that, when the degree
\(k\) of the polynomial is fixed, the coefficient space has fixed dimension.
The signs of the polynomial on the spectrum, together with the diagonal
constraints, are therefore determined by a hyperplane arrangement of
polynomial size. Note that this scenario is useful for applications in error correction \cite{APR2025}, where a fundamental question is to determine the maximum cardinality of a code with prescribed minimum distance. 

Using the above idea, we present an polynomial-time algorithm for optimizing the
inertia-type bound on \(\alpha_k(G)\) for every fixed \(k\). We then treat
the quadratic cases \(k=1\) and \(k=2\) separately and obtain a more explicit algorithm,
running in \(\mathcal{O}(1)\) time when added to the pre-processing steps and linear time after the spectral preprocessing, respectively. Finally, we prove
analogous fixed-\(k\) and \(k=1\)/\(k=2\) polynomial-time results for the second inertia-type
bound on \(\chi_k(G)\).
\subsection{Complexity analysis for the inertia-type bound on $\alpha_k(G)$}
\label{sec:complexity}
\subsubsection{Problem formulation and preprocessing}
\label{subsec:alpha-problem-preprocessing}

We first consider the inertia-type bound on \(\alpha_k(G)\). Throughout this
subsection, \(G\) is an arbitrary graph on \(n\) vertices with adjacency
matrix \(A\), and we use the same spectral notation as in
Section~\ref{preliminaries}.

The improved MILP formulation~\eqref{eq:newmilp} optimizes over
polynomials \(p\in\RR_k[x]\) satisfying the diagonal constraints
\[
        (p(A))_{vv}\geq 0,
        \qquad v\in V(G).
\]
For such a polynomial, the resulting inertia-type bound counts the
eigenvalues on which \(p\) is nonnegative, with multiplicity. Thus the
corresponding optimization problem is
\begin{equation}
\label{eq:poly-inertia-alpha-problem}
\boxed{
\begin{array}{ll}
\textsc{Poly-Inertia}_{\alpha}(G,k):
&
\displaystyle
\mu_k(G)
=
\min_{p\in\RR_k[x]}
\sum_{\substack{0\leq j\leq d\\ p(\theta_j)\geq0}} m_j
\\[6mm]

\text{subject to}
&
\displaystyle
(p(A))_{vv}\geq0,
\qquad v\in V(G).
\end{array}
}
\end{equation}
Equivalently, \(\mu_k(G)\) is the minimum number of eigenvalues, counted
with multiplicity, on which an admissible polynomial \(p\) is nonnegative.
The binary variables \(b_j\) in \eqref{eq:newmilp} encode the conditions
\(p(\theta_j)\geq0\), and the objective \(\vecm^\top\vecb\) is precisely the
objective in \eqref{eq:poly-inertia-alpha-problem}. \\

We now record the preprocessing steps used in the complexity results below.

\begin{fact}[Spectrum computation]
\label{fact:spectrum}
The complete spectrum of an \(n\times n\) real symmetric matrix can be
computed in \(\mathcal{O}(n^3)\) arithmetic operations
\cite{Francis1961QR,Francis1962QR}.
\end{fact}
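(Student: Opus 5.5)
Fact~\ref{fact:spectrum} is cited from the literature, so the plan is to recall why the standard route reaches $\mathcal{O}(n^3)$ arithmetic operations, not to reprove convergence theory. The route has two phases. First, reduce the symmetric matrix $A$ to symmetric tridiagonal form $T = Q^\top A Q$ by a sequence of $n-2$ Householder reflections. Each reflection $H_r = I - 2\mathbf{w}_r\mathbf{w}_r^\top$ zeroes the entries below the subdiagonal in column $r$. Applying $H_r$ on both sides is a symmetric rank-two update costing $\mathcal{O}((n-r)^2)$ operations. Summing over $r$ gives $\mathcal{O}(n^3)$ for this phase, roughly $\tfrac{4}{3}n^3$ operations. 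Orthogonal similarity preserves the spectrum, so $\operatorname{sp}A = \operatorname{sp}T$.

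Second, run the (shifted) QR iteration of Francis on $T$. Tridiagonal structure is preserved under QR steps, so each step can be carried out by $n-1$ Givens rotations in $\mathcal{O}(n)$ operations, implicitly via the bulge-chasing formulation. With Wilkinson shifts, the last off-diagonal entry converges to zero globally and asymptotically cubically. Hence each eigenvalue is deflated after $\mathcal{O}(1)$ iterations to working precision, and after deflation the iteration continues on a smaller tridiagonal block. The total cost of this phase is therefore $\mathcal{O}(n)\cdot\mathcal{O}(n) = \mathcal{O}(n^2)$, which is dominated by the reduction phase. Together the two phases give $\mathcal{O}(n^3)$. Eigenvectors are not needed for this paper, so the rotations need not be accumulated; accumulating them would still cost only $\mathcal{O}(n^3)$.

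The main obstacle is making precise the claim that only $\mathcal{O}(1)$ QR iterations are needed per eigenvalue. This is a statement about convergence to a prescribed accuracy in floating point, not about exact finite termination. Exact eigenvalues of an integer matrix are generally irrational, so no finite arithmetic algorithm can output them exactly. I would therefore interpret the fact, as the cited references do, as computing the spectrum to any fixed working precision. Under that reading, the number of iterations per eigenvalue is bounded by a constant via the global convergence of the Wilkinson-shifted iteration, and the count holds. For the later complexity results, the subsequent algorithms only compare values $p(\theta_j)$, and with this interpretation they inherit the same numerical model.
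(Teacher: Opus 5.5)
Your proposal is correct and follows essentially the paper's route: the paper gives no argument beyond citing Francis's QR algorithm, and Householder tridiagonalization in $\mathcal{O}(n^3)$ followed by implicitly shifted tridiagonal QR at $\mathcal{O}(n)$ per sweep is the standard accounting behind that citation; your fixed-working-precision reading is the right interpretation of the fact. The one step to tighten is the sweep count: global convergence alone gives no uniform bound on the number of iterations, so you should instead invoke the matrix-independent (at least linear) convergence rate of the Wilkinson-shifted iteration, which yields $\mathcal{O}(\log(1/\varepsilon))$ sweeps per deflation, i.e.\ $\mathcal{O}(1)$ precisely because the precision $\varepsilon$ is fixed.
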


\begin{fact}[Spectral preprocessing]
\label{fact:spectral-preprocessing}
Given a graph \(G\) on \(n\) vertices, the distinct eigenvalues of its
adjacency matrix, together with their multiplicities, can be computed and
grouped as
\[
        \operatorname{sp}G
        =
        \{\theta_0^{[m_0]},\theta_1^{[m_1]},\ldots,\theta_d^{[m_d]}\},
        \qquad
        \theta_0>\theta_1>\cdots>\theta_d,
\]
in time \(\mathcal{O}(n^3)\).
\end{fact}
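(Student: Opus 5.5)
The plan is to reduce Fact~\ref{fact:spectral-preprocessing} to Fact~\ref{fact:spectrum}, followed by an inexpensive grouping step. First, form the adjacency matrix \(A\) of \(G\). This is an \(n\times n\) real symmetric \(0/1\) matrix and can be built in \(\mathcal{O}(n^2)\) time from any standard representation of \(G\). By Fact~\ref{fact:spectrum}, the full list \(\lambda_1,\ldots,\lambda_n\) of eigenvalues, counted with multiplicity, can then be computed in \(\mathcal{O}(n^3)\) arithmetic operations, for instance by Householder tridiagonalization followed by the QR algorithm.

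Next, sort the list so that \(\lambda_1\geq\lambda_2\geq\cdots\geq\lambda_n\); this takes \(\mathcal{O}(n\log n)\) comparisons. A single linear scan then groups equal consecutive entries. Start a new group whenever \(\lambda_{i+1}\neq\lambda_i\), and record the common value as \(\theta_j\) and the length of the run as \(m_j\). This scan produces \(\theta_0>\theta_1>\cdots>\theta_d\) with \(\sum_j m_j=n\) in \(\mathcal{O}(n)\) time. Summing the costs gives \(\mathcal{O}(n^2+n^3+n\log n+n)=\mathcal{O}(n^3)\).

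The main obstacle is the equality test in the grouping step. Eigenvalues returned by a numerical method are only approximate, so deciding whether \(\lambda_i=\lambda_{i+1}\) needs care. I would handle this within the paper's arithmetic model, as Fact~\ref{fact:spectrum} already does, by treating eigenvalues as exact real numbers with unit-cost comparisons.

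If a rigorous bit-model justification is wanted, I would instead argue as follows. The eigenvalues are the roots of the integer characteristic polynomial \(\chi_A(x)\), which can be computed exactly in polynomial time. Let \(g=\gcd(\chi_A,\chi_A')\). Then \(\chi_A/g\) has the distinct eigenvalues as simple roots, and repeated gcds give the multiplicities, all in polynomial time. Moreover, root separation bounds for integer polynomials of degree \(n\) with polynomially bounded bit size give a tolerance \(\delta\) such that distinct eigenvalues differ by more than \(2\delta\). Computing the \(\lambda_i\) to precision \(\delta\) therefore makes the grouping exact. I would present the arithmetic-model argument as the proof and mention this bit-model argument only as a remark.
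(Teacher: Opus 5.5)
Your proposal matches the paper's argument: invoke Fact~\ref{fact:spectrum} for the $\mathcal{O}(n^3)$ eigenvalue computation, then sort in $\mathcal{O}(n\log n)$ and group equal consecutive values in $\mathcal{O}(n)$, treating eigenvalues as exact reals with unit-cost comparisons as the paper implicitly does. Your bit-model remark is extra and not needed. It would give only polynomial rather than $\mathcal{O}(n^3)$ bit complexity, and if each approximation is only within $\delta$ of the true eigenvalue, the separation must exceed $4\delta$, not $2\delta$, for threshold grouping to be exact.
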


Indeed, by Fact~\ref{fact:spectrum}, the eigenvalues can be computed in
\(\mathcal{O}(n^3)\) arithmetic operations. Sorting them takes
\(\mathcal{O}(n\log n)\) time, and grouping equal eigenvalues to obtain
their multiplicities takes \(\mathcal{O}(n)\) time. Hence the total
spectral preprocessing cost is
\[
        \mathcal{O}(n^3)+\mathcal{O}(n\log n)+\mathcal{O}(n)
        =
        \mathcal{O}(n^3).
\]

\begin{fact}[Diagonal preprocessing for fixed \(k\)]
\label{fact:diagonal-preprocessing}
For fixed \(k\), the values
\[
        (A^i)_{vv},
        \qquad i=0,\ldots,k,\quad v\in V(G),
\]
can be computed in polynomial time in \(n\).
\end{fact}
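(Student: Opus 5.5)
The plan is to compute the values \((A^i)_{vv}\) for all \(i\le k\) from the powers \(A^0,A^1,\dots,A^k\) of the adjacency matrix, built by repeated multiplication. We start from \(A^0=I\) and \(A^1=A\), both available in \(\mathcal{O}(n^2)\) time from the input graph. For \(i\geq 2\) we set \(A^i=A^{i-1}\cdot A\), computed by the standard schoolbook algorithm in \(\mathcal{O}(n^3)\) arithmetic operations. After forming each power we read off its \(n\) diagonal entries, which costs \(\mathcal{O}(n)\) per power. In total there are \(k-1\) multiplications, so the cost is \(\mathcal{O}(kn^3)\) arithmetic operations. This is \(\mathcal{O}(n^3)\) for fixed \(k\), and therefore polynomial in \(n\).

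The one point needing care, and the main (mild) obstacle, is bit complexity: arithmetic operations should also act on numbers of polynomial size. All entries of \(A^i\) are nonnegative integers, since \((A^i)_{uv}\) counts walks of length \(i\) from \(u\) to \(v\). Each vertex has at most \(n-1\) neighbours, so \((A^i)_{uv}\le (n-1)^{i}\le n^{k}\). Hence every entry has bit length \(\mathcal{O}(k\log n)\), and each arithmetic operation runs in time polynomial in \(\log n\) for fixed \(k\). The total running time is therefore polynomial in \(n\).

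We could also note the sharper observations for small \(i\), which avoid any matrix multiplication. Namely \((A^0)_{vv}=1\), \(A_{vv}=0\), \((A^2)_{vv}=d(v)\), and \((A^3)_{vv}=2t(v)\). Faster matrix multiplication would improve the exponent, but it is not needed for the claim.
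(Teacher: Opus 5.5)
Your proposal is correct and follows the paper's own argument: form $A^0,\ldots,A^k$ by repeated multiplication, read off the diagonal entries, and observe that the cost is polynomial in $n$ for fixed $k$ (the paper also notes the explicit low-degree values, as you do). Your bit-length bound, that entries are at most $n^k$ and so need $\mathcal{O}(k\log n)$ bits, is a useful detail the paper leaves implicit.
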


For example, one may form the powers \(A^i\) for \(i=0,\ldots,k\) and record
their diagonal entries. Since \(k\) is fixed in the complexity results
below, this preprocessing remains polynomial in \(n\). In the low-degree
cases the required data are explicit: for \(k=1\), \(A_{vv}=0\) for every
vertex \(v\), while for \(k=2\),
\[
        (A^2)_{vv}=d(v).
\]
Hence, the time complexity of all preprocessing steps is $\mathcal{O}(n^3)$.
\subsubsection{Polynomial time for fixed \(k\)}
\label{subsec:alpha-fixed-k}

We first prove that the optimization problem
\eqref{eq:poly-inertia-alpha-problem} is polynomial-time solvable whenever
the degree \(k\) is fixed. The proof uses the fact that the signs of all
quantities appearing in the problem are determined by a hyperplane
arrangement in the coefficient space of the polynomial.

\begin{theorem}
\label{thm:fixed-k}
For every fixed integer \(k\geq 0\), the optimization problem
\eqref{eq:poly-inertia-alpha-problem} can be solved in time polynomial in
\(n\), after the preprocessing described in
Subsection~\ref{subsec:alpha-problem-preprocessing}.
\end{theorem}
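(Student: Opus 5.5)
We work in the coefficient space $\RR^{k+1}$, with $\veca=(a_0,\ldots,a_k)^\top$ identified with $p(x)=\sum_i a_ix^i$. Every quantity in \eqref{eq:poly-inertia-alpha-problem} is a linear form in $\veca$. For each distinct eigenvalue we have the form $\veca\mapsto p(\theta_j)=\sum_i a_i\theta_j^i$, giving $d+1\le n$ forms. For each vertex we have $\veca\mapsto (p(A))_{vv}=\sum_i a_i(A^i)_{vv}$, giving $n$ forms, whose coefficients are available from Fact~\ref{fact:diagonal-preprocessing}.

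The feasible set $F=\{\veca:(p(A))_{vv}\ge 0\ \forall v\}$ is a polyhedral cone. The objective depends only on the sign pattern $\sigma(\veca)\in\{-,0,+\}^{d+1}$ of $(p(\theta_0),\ldots,p(\theta_d))$. Its value is $\sum_{j:\sigma_j\in\{0,+\}}m_j$. So $\mu_k(G)$ is the minimum of this function over all sign patterns realized by points of $F$.

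\textbf{Step 1: reduce to the cells of an arrangement.} Let $\mathcal H$ consist of the $\le 2n$ hyperplanes $\{p(\theta_j)=0\}$ and $\{(p(A))_{vv}=0\}$ in $\RR^{k+1}$ (dropping the trivial forms, e.g.\ $A_{vv}=0$ for $i=1$). Every face of the arrangement $\mathcal H$ has constant signs for all $2n$ forms. Hence each face is either entirely inside or entirely outside $F$, and it determines one value of the objective.

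\textbf{Step 2: count and enumerate the faces.} A classical result (Zaslavsky / Buck) gives the number of faces of an arrangement of $N$ hyperplanes in dimension $D$ as $O(N^D)$. Here $N\le 2n$ and $D=k+1$, so there are $O(n^{k+1})$ faces, which is polynomial for fixed $k$.

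We enumerate a representative point of each face together with its sign vector. Either of two methods works:
\begin{itemize}
\item the incremental construction of Edelsbrunner--O'Rourke--Seidel, running in $O(N^D)$ time;
\item a direct method: every face is the relative interior of some intersection of chosen hyperplanes with strict sign choices. Take all subsets of at most $D$ hyperplanes and solve small linear systems to get vertices or flats. Then test each candidate sign vector by an LP feasibility check with $\varepsilon$-strictness. This costs $O(N^{D})$ candidate subsets times polynomially many LPs.
\end{itemize}

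For each face whose sign vector has all diagonal signs in $\{0,+\}$, we evaluate the objective in $O(n)$ time. We return the minimum. Since $\veca=0$ lies in $F$ (it is the face with all signs $0$), the minimum is over a nonempty set.

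\textbf{Step 3: correctness.} Every feasible $\veca$ lies in exactly one face, and that face has the same sign vector as $\veca$. So every achievable objective value is found, and no infeasible value is reported.

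The main obstacle is Step 2. We must make the enumeration genuinely polynomial and exact. This requires handling strict versus non-strict signs, including degenerate eigenvalue hyperplanes that pass through the region where the diagonal constraints are tight. It also requires the arithmetic on the (possibly irrational) eigenvalues to be treated in the real-RAM model consistent with Fact~\ref{fact:spectrum}.

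If citing the arrangement-enumeration algorithm is undesirable, the fallback is the brute-force argument from the second method above. Each face of the arrangement is characterized by a subset $S$ of at most $k+1$ hyperplanes defining its affine hull, together with a sign pattern. Each realizable sign pattern is certified by one LP over $\RR^{k+1}$ with $O(n)$ constraints, which is polynomial time. The number of subsets is $\binom{2n}{\le k+1}=O(n^{k+1})$. A simpler route is also available: sample a generic interior point in each full-dimensional cell of the arrangement restricted to each flat. This bounds the total work by $n^{O(k)}$.
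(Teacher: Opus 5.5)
Your proof is correct and follows essentially the paper's argument: both work in the coefficient space $\RR^{k+1}$, treat the at most $2n$ linear forms $p(\theta_j)$ and $(p(A))_{vv}$ as a hyperplane arrangement with $O(n^{k+1})$ faces for fixed $k$, enumerate this arrangement with a standard fixed-dimension construction algorithm, and minimize the objective over the feasible faces. By enumerating faces of all dimensions you handle the non-strict inequalities directly, whereas the paper counts only full-dimensional cells; that also suffices, because $(1,0,\ldots,0)$ is strictly feasible and small perturbations can only shrink $\{j: p(\theta_j)\geq 0\}$, so your main route is sound, but your brute-force fallback is underspecified, since it does not say how the candidate sign vectors on each flat are generated.
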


\begin{proof}
Let
\[
        p(x)=a_0+a_1x+\cdots+a_kx^k.
\]
The sign of \(p(\theta_j)\), for each \(j=0,\ldots,d\), is determined by
the position of the coefficient vector
\[
        \veca=(a_0,\ldots,a_k)\in\mathbb{R}^{k+1}
\]
with respect to the hyperplane
\[
        \sum_{i=0}^k a_i\theta_j^i=0.
\]
Similarly, the diagonal constraint
\[
        (p(A))_{vv}=\sum_{i=0}^k a_i(A^i)_{vv}\geq 0
\]
is determined by the position of \(\veca\) with respect to the hyperplane
\[
        \sum_{i=0}^k a_i(A^i)_{vv}=0.
\]
Thus the problem is governed by an arrangement of at most \(d+1+n\leq 2n\)
hyperplanes in \(\mathbb{R}^{k+1}\).

Since \(k\) is fixed, the number of cells in this arrangement is bounded by
\[
        \sum_{r=0}^{k+1}\binom{2n}{r}
        =
        \mathcal{O}(n^{k+1}).
\]
Moreover, the cells of a fixed-dimensional hyperplane arrangement can be
enumerated in polynomial time (\cite{EdelsbrunnerGuibasSharir1986}). For each cell, the signs of all quantities
\(p(\theta_j)\) and \((p(A))_{vv}\) are constant. Therefore, for each cell,
one can check whether all diagonal constraints are satisfied and then
compute the corresponding value of
\[
        \sum_{\substack{0\leq j\leq d\\ p(\theta_j)\geq 0}} m_j .
\]
Taking the minimum over all feasible cells gives the optimum.

Since \(k\) is fixed, the number of cells is polynomial in \(n\), and each
cell can be processed in polynomial time. Hence the whole optimization
problem can be solved in polynomial time.
\end{proof}

Theorem~\ref{thm:fixed-k} is relevant as for
applications in which \(k\) is fixed in advance. It shows that, for every
fixed distance parameter \(k\), the best polynomial for the first
inertia-type bound can be found in polynomial time. Since inertial-type
bounds for \(\alpha_k(G)\) have been shown to be tight for several classes
of graphs, this gives polynomial-time algorithms for computing the exact
value of \(\alpha_k(G)\) on those instances where the bound is attained. Regarding the latter, there are several graph classes for which is known that inertia-type bound on $\alpha_k(G)$ is tight \cite{acfns2020}, and is also known to be tight for some graph classes associated to metrics relevant for error-correction \cite{APR2025}.  

\subsubsection{Polynomial time for the cases \(k=1\) and \(k=2\)}
The preceding theorem proves polynomial-time solvability for every fixed
degree \(k\). For the first two degrees, the problem has a more explicit
structure. When \(k=1\), the diagonal constraints impose only a sign
condition on the constant coefficient. When \(k=2\), the problem can be
reduced to choosing an interval in the ordered spectrum.

\begin{theorem}
\label{thm:k1}
For \(k=1\), the value \(\mu_1(G)\) can be computed during the spectral
preprocessing, with no additional cost. In particular, including
spectral preprocessing, the total running time is \(\mathcal{O}(n^3)\).
\end{theorem}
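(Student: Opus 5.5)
For $k=1$ the plan is to solve problem \eqref{eq:poly-inertia-alpha-problem} in closed form. Write $p(x)=a_0+a_1x$. Since $(A^0)_{vv}=1$ and $A_{vv}=0$, every diagonal constraint reduces to the single condition
\[
        (p(A))_{vv}=a_0\geq 0 ,\qquad v\in V(G).
\]
So the feasible set is just $\{(a_0,a_1): a_0\geq 0\}$, and we must minimize the number of eigenvalues (with multiplicity) satisfying $a_0+a_1\theta_j\geq 0$.

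I would split into cases according to the sign of $a_1$.

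\emph{Case $a_1=0$.} Then $p\equiv a_0\geq 0$, and the count is $n$.

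\emph{Case $a_1>0$.} The condition $p(\theta_j)\geq 0$ reads $\theta_j\geq -a_0/a_1=:-s$ with $s\geq 0$. The counted set is the set of eigenvalues in $[-s,\infty)$, which is monotone in $s$. It is therefore minimized by taking $a_0=0$, giving
\[
        |\{j: \theta_j\geq 0\}|_{\vecm}=\sum_{\theta_j\geq 0}m_j .
\]

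\emph{Case $a_1<0$.} Symmetrically, the count is minimized at $a_0=0$ and equals $\sum_{\theta_j\leq 0}m_j$.

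Hence
\[
        \mu_1(G)=\min\Bigl\{\sum_{\theta_j\geq0}m_j,\ \sum_{\theta_j\leq 0}m_j\Bigr\},
\]
which is the classical Cvetković inertia bound $\min\{n-n^-,\,n-n^+\}$. The monotonicity argument is the only real step, and it is elementary: decreasing $a_0$ toward $0$ only shrinks the set where $p\geq 0$.

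For the complexity claim, note that in the spectral preprocessing of Fact~\ref{fact:spectral-preprocessing} the eigenvalues are already sorted and grouped. While grouping them, one can accumulate the numbers of nonnegative and nonpositive eigenvalues, which costs $\mathcal{O}(1)$ extra work per eigenvalue inside a loop that is already performed. Taking the minimum of the two numbers is then $\mathcal{O}(1)$. Fact~\ref{fact:diagonal-preprocessing} is not needed here, since the diagonal data are known explicitly. The total running time is therefore dominated by Fact~\ref{fact:spectrum}, namely $\mathcal{O}(n^3)$.

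The main subtlety is numerical rather than mathematical: one must decide which computed eigenvalues count as exactly zero, because the signs at $\theta_j=0$ matter in the formula. I would handle this with the same tolerance already used for grouping equal eigenvalues.
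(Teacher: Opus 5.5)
Your proof is correct and follows the paper's own route. You reduce the diagonal constraints to $a_0\geq 0$, split on the sign of $a_1$, obtain $\mu_1(G)=\min\{\sum_{\theta_j\geq 0}m_j,\ \sum_{\theta_j\leq 0}m_j\}$, and accumulate both counts while the eigenvalues are grouped. Your monotonicity argument for taking $a_0=0$ and your treatment of the case $a_1=0$ fill in steps the paper's proof leaves implicit.
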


\begin{proof}
Let
\[
        p(x)=a_0+a_1x .
\]
Since \(A_{vv}=0\) for every vertex \(v\), the diagonal constraints reduce
to
\[
        (p(A))_{vv}=a_0\geq0.
\]
Thus, apart from the constant term, the only choice is the sign of \(a_1\).

If \(a_1>0\), then the eigenvalues counted in the objective are those
satisfying \(\theta_j\geq0\). If \(a_1<0\), then the eigenvalues counted in
the objective are those satisfying \(\theta_j\leq0\). Therefore
\[
        \mu_1(G)
        =
        \min
        \left\{
        \sum_{\substack{0\leq j\leq d\\ \theta_j\geq0}}m_j,\,
        \sum_{\substack{0\leq j\leq d\\ \theta_j\leq0}}m_j
        \right\}.
\]
These two quantities can be computed while the eigenvalues are grouped by
sign during the spectral preprocessing. Hence computing \(\mu_1(G)\) adds no
cost beyond the \(\mathcal{O}(n^3)\) spectral computation.
\end{proof}

\begin{theorem}
\label{thm:k2}
For \(k=2\), the value \(\mu_2(G)\) can be computed in
\(\mathcal{O}(n)\) time after the spectral preprocessing.
\end{theorem}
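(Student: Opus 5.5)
For $p(x)=a_0+a_1x+a_2x^2$, the diagonal constraints are $a_0+d(v)a_2\ge 0$. By Theorem~\ref{thm:redundant-degree-two} they reduce to the two constraints
\[
a_0+\delta a_2\ge0,\qquad a_0+\Delta a_2\ge0,
\]
where $\delta$ and $\Delta$ are the minimum and maximum degrees. Both degrees are read off in $\mathcal{O}(n)$ time, or for free during preprocessing. The plan is then to split on the sign of $a_2$ and show that in each case the set $N_p=\{j: p(\theta_j)<0\}$ is an interval of consecutive distinct eigenvalues. Since the objective equals $n-\sum_{j\in N_p}m_j$, minimizing it means maximizing the multiplicity mass of an achievable ``negative set''.

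\emph{Case $a_2>0$.} Here $p$ is a convex parabola, so $N_p$ is the set of $\theta_j$ lying strictly inside the open interval $(r_1,r_2)$ between the roots. The binding constraint is $a_0+\delta a_2\ge0$, so $p(x)<0$ requires
\[
(x+a_1/(2a_2))^2 < a_1^2/(4a_2^2) - a_0/a_2 \le a_1^2/(4a_2^2)+\delta .
\]
The key step is to show that a consecutive block $\theta_s>\dots>\theta_t$ is achievable exactly when some parabola with $a_0=-\delta a_2$ exists. Equivalently, with roots $r_1<r_2$ we need $r_1r_2=a_0/a_2=-\delta$ and $r_1<\theta_t,\ \theta_s<r_2$. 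Rescaling $a_2=1$ and writing $r_1=-\delta/r_2$ with $r_2>0$ (for $\delta>0$), the condition is that some $r_2>\max(\theta_s,0)$ satisfies $-\delta/r_2<\theta_t$. This is a simple explicit test. If $\theta_t\ge0$ it always holds (take $r_2$ large). If $\theta_t<0$ it requires $\theta_s<-\delta/\theta_t$, i.e.\ $\theta_s\theta_t>-\delta$. So feasibility of an interval is the monotone condition $\theta_s\theta_t>-\delta$ or $\theta_t\ge 0$, with analogous handling when $\delta=0$ (then $a_0\ge0$ forces the roots to have the same sign).

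\emph{Case $a_2<0$.} Now $p$ is concave, so $N_p$ is the complement of a closed interval, i.e.\ a prefix plus a suffix of the ordered spectrum. The binding constraint is $a_0\ge -\Delta a_2=\Delta|a_2|$. Normalizing $a_2=-1$, the roots satisfy $r_1r_2=-a_0\le-\Delta$. We need $\theta$'s outside $[r_1,r_2]$ to be negative, and since the constraint is only an inequality, nonnegativity is allowed inside. The condition becomes an analogous product inequality: the largest excluded-below eigenvalue $\theta_t$ and the smallest excluded-above one $\theta_s$ must admit $r_1\ge\theta_t$-side, $r_2\le\theta_s$-side separation with $r_1r_2\le-\Delta$. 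Hence $r_1<0<r_2$, and we get an explicit inequality between the boundary eigenvalues. The cases $a_2=0$ reduce to Theorem~\ref{thm:k1}-type sign choices (constant $\ge0$ plus linear), which are also prefix/suffix sets.

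\emph{Algorithm.} In both cases the feasibility condition on an interval (or on a prefix/suffix pair) is monotone: shrinking a feasible negative block keeps it feasible. A two-pointer sweep over the sorted distinct eigenvalues, with prefix sums of multiplicities, therefore finds the maximum-mass feasible block in $\mathcal{O}(d)=\mathcal{O}(n)$ time. The answer is $\mu_2(G)=n-\max$ over the cases.

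\emph{Main obstacle.} The hardest step is deriving the exact achievability condition for each block, including the boundary cases $\delta=0$, roots coinciding with eigenvalues (which matter because of the strict versus nonstrict sign), and the concave case. I would first verify the convex case via the root-product parametrization and then mirror it, checking that monotonicity really holds so that the two-pointer sweep is valid.
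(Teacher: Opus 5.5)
Your proposal is correct and follows essentially the paper's route: the diagonal constraints reduce to the extremal degrees, which in root form become $r_1r_2\ge-\delta$ for $a_2>0$ and $r_1r_2\le-\Delta$ for $a_2<0$, so the negative set is a spectral interval (resp.\ a prefix plus a suffix), and a two-pointer sweep with prefix sums finds the best one in $\mathcal{O}(n)$. Your exact strict product tests on the boundary eigenvalues (e.g.\ $\theta_s\theta_t>-\delta$ in the convex case, which in fact also covers $\delta=0$) are a tidier substitute for the paper's ``four endpoint conventions'' and make the downward-closedness needed for the sweep immediate.
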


\begin{proof}
Let
\[
        p(x)=a_0+a_1x+a_2x^2 .
\]
If \(a_2=0\), then \(p\) has degree at most one, and the problem reduces to
Theorem~\ref{thm:k1}. We therefore assume that \(a_2\neq0\).

If \(p\) has no real root, then its sign is constant on the real line. The
case \(a_2<0\) cannot satisfy the diagonal constraints, while the case
\(a_2>0\) gives the trivial objective value \(n\). This candidate can be
checked separately. Hence it remains to consider the case where \(p\) has
real roots. We write
\[
        p(x)=a_2(x-\alpha)(x-\beta),
        \qquad \alpha\leq \beta .
\]
Since \(A_{vv}=0\) and \((A^2)_{vv}=d(v)\), we have
\[
        (p(A))_{vv}
        =
        a_2\bigl(\alpha\beta+d(v)\bigr),
        \qquad v\in V(G).
\]
Let \(d_{\min}\) and \(d_{\max}\) denote the minimum and maximum degrees of
\(G\). The diagonal constraints are therefore equivalent to
\[
\begin{array}{ll}
        a_2>0: & \alpha\beta\geq -d_{\min}, \\[2mm]
        a_2<0: & \alpha\beta\leq -d_{\max}.
\end{array}
\]

We treat the two signs of \(a_2\) separately.

\medskip
\noindent
\textbf{Case \(a_2>0\).}
In this case \(p(x)<0\) precisely for \(x\in(\alpha,\beta)\). Since the
objective counts the eigenvalues for which \(p(\theta_j)\geq0\), minimizing
the objective is equivalent to maximizing the total multiplicity of the
eigenvalues lying strictly between the two roots, subject to
\[
        \alpha\beta\geq -d_{\min}.
\]
Between two consecutive distinct eigenvalues, moving a root does not change
which eigenvalues lie strictly between the roots. Hence an optimum can be
found by considering roots placed at eigenvalues, together with
infinitesimal perturbations into adjacent spectral gaps.

Let
\[
        S_0=0,
        \qquad
        S_t=\sum_{s<t}m_s,
        \qquad t=1,\ldots,d+1,
\]
be the prefix sums of the multiplicities. These allow the total multiplicity
of any spectral interval to be computed in constant time. A two-pointer scan
then finds the feasible interval of maximum interior weight: the right
endpoint is advanced while the product constraint remains feasible, and
when feasibility fails the left endpoint is advanced. Since both endpoints
move monotonically through the ordered list
\(\theta_0,\ldots,\theta_d\), each index is visited at most once.

At each step, one tests the four possible endpoint conventions obtained by
placing each root either at an eigenvalue or in the adjacent spectral gap.
This accounts for the fact that eigenvalues at roots are counted as
nonnegative, whereas eigenvalues strictly between the roots are not. Thus
the case \(a_2>0\) is solved in \(\mathcal{O}(n)\) time.

\medskip
\noindent
\textbf{Case \(a_2<0\).}
Now \(p(x)>0\) on \((\alpha,\beta)\) and is negative outside this interval.
Thus minimizing the number of eigenvalues with \(p(\theta_j)\geq0\) amounts
to finding a feasible interval of minimum total multiplicity, subject to
\[
        \alpha\beta\leq -d_{\max}.
\]
In particular, one root must lie on the positive side and the other on the
negative side of the spectrum. Again, prefix sums give the weight of each
candidate interval in constant time.

A two-pointer scan is used as follows. The outer pointer scans the positive
eigenvalues, while the second pointer moves monotonically through the
negative eigenvalues until the product condition
\[
        \alpha\beta\leq -d_{\max}
\]
is first satisfied. The same four endpoint conventions are tested in
constant time. Since neither pointer ever moves backwards, every index is
examined at most once. Hence the case \(a_2<0\) is also solved in
\(\mathcal{O}(n)\) time.

Combining the cases \(a_2>0\), \(a_2<0\), and \(a_2=0\), and excluding the
spectral preprocessing, the degree-two problem can be solved in
\(\mathcal{O}(n)\) time. The precise two-pointer procedures are given in
Appendix~\ref{app:quadratic-alpha-algorithms}.
\end{proof}

Thus the general fixed-\(k\) hyperplane-arrangement argument gives
polynomial-time solvability, while the quadratic case admits a much more
direct algorithm based only on the ordered spectrum and the extremal degrees
of \(G\).

\subsection{Complexity analysis for the second inertia-type bound on $\chi_k(G)$}
\label{sec:complexity-second-inertial}

\subsubsection{Problem formulation}
\label{subsec:chi-problem-formulation}

We now consider the second inertia-type bound on \(\chi_k(G)\). Throughout
this subsection, \(G\) is assumed to be \(k\)-partially walk-regular, and we
use the same spectral notation as in
Section~\ref{preliminaries}.

The second inertia-type bound is obtained from polynomials
\(p\in\RR_k[x]\) satisfying the trace condition
\[
        \sum_{i=1}^{n}p(\lambda_i)=0.
\]
For such a polynomial, define
\[
        n_+(p)=|\{i:p(\lambda_i)>0\}|,
        \qquad
        n_-(p)=|\{i:p(\lambda_i)<0\}|.
\]
The bound has the form
\[
        \chi_k(G)
        \geq
        1+
        \max
        \left\{
        \frac{n_+(p)}{n_-(p)},
        \frac{n_-(p)}{n_+(p)}
        \right\},
\]
where the maximum is taken over all admissible polynomials \(p\) for which
the denominators are nonzero. Since replacing \(p\) by \(-p\) interchanges
\(n_+(p)\) and \(n_-(p)\), it is enough to optimize one of the two ratios.

Thus the corresponding optimization problem is
\begin{equation}
\label{eq:poly-inertia-chi-problem}
\boxed{
\begin{array}{ll}
\textsc{Poly-Inertia}_{\chi}(G,k):
&
\displaystyle
\nu_k(G)
=
\max_{p\in\RR_k[x]}
\left(
1+
\frac{
|\{i:p(\lambda_i)<0\}|
}{
|\{i:p(\lambda_i)>0\}|}
\right)
\\[6mm]

\text{subject to}
&
\displaystyle
\sum_{i=1}^{n}p(\lambda_i)=0,
\end{array}
}
\end{equation}
where polynomials with
\[
        |\{i:p(\lambda_i)>0\}|=0
\]
are discarded. Equivalently, using the distinct eigenvalues, the numerator
and denominator can be written as
\[
        \sum_{\substack{0\leq j\leq d\\ p(\theta_j)<0}}m_j,
        \qquad
        \sum_{\substack{0\leq j\leq d\\ p(\theta_j)>0}}m_j .
\]

\subsubsection{Polynomial time for fixed \(k\)}
\label{subsec:chi-fixed-k}

We next show that, as for the first inertia-type bound, the optimization
problem for the second inertia-type bound is polynomial-time solvable when
the degree \(k\) is fixed.

\begin{theorem}
\label{thm:second-fixed-k}
For every fixed integer \(k\geq0\), the optimization problem
\eqref{eq:poly-inertia-chi-problem} can be solved in time polynomial in
\(n\), after the spectral preprocessing.
\end{theorem}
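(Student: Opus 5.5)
The plan mirrors the proof of Theorem~\ref{thm:fixed-k}, with the diagonal constraints replaced by the single trace hyperplane. Write \(p(x)=a_0+a_1x+\cdots+a_kx^k\) with coefficient vector \(\veca\in\RR^{k+1}\). The sign of \(p(\theta_j)\) is the side of \(\veca\) relative to the hyperplane \(H_j=\{\veca:\sum_{i=0}^k a_i\theta_j^i=0\}\), for \(j=0,\ldots,d\). The trace condition \(\sum_{j}m_jp(\theta_j)=0\) is linear in \(\veca\), so the feasible set is the linear subspace
\[
        T=\Bigl\{\veca:\sum_{i=0}^k a_i\sum_{j=0}^d m_j\theta_j^i=0\Bigr\},
\]
of dimension \(k\), or \(k+1\) in the degenerate case where every coefficient vanishes, which cannot happen since the \(i=0\) coefficient is \(n\). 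We therefore work inside \(T\cong\RR^{k}\) and intersect it with the at most \(d+1\leq n\) hyperplanes \(H_j\). On each face of the induced arrangement, meaning the relatively open cells of all dimensions and not only the full-dimensional ones, the sign vector \((\operatorname{sign}p(\theta_j))_j\in\{-,0,+\}^{d+1}\) is constant.

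Faces of lower dimension must be included, because eigenvalues at which \(p\) vanishes count in neither \(n_+\) nor \(n_-\). Placing a root exactly at an eigenvalue can therefore raise the ratio, by shrinking \(n_+\) without shrinking \(n_-\). The objective \(1+n_-(p)/n_+(p)\) depends only on the sign vector, so \(\nu_k(G)\) is the maximum of this quantity over all faces whose sign vector has at least one \(+\).

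For the counting, note that an arrangement of \(N\leq n\) hyperplanes in fixed dimension \(k\) has \(\mathcal{O}(N^{k})\) faces of all dimensions. Such an arrangement, together with a sample point and sign vector for every face, can be constructed in polynomial time for fixed dimension \cite{EdelsbrunnerGuibasSharir1986}. Alternatively, one can enumerate subsets of at most \(k\) hyperplanes, take a point in the relative interior of the corresponding flat intersected with \(T\), and perturb generically. For each face we compute \(n_+\) and \(n_-\) in \(\mathcal{O}(n)\) time by summing multiplicities, and we keep the maximum ratio.

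The one delicate point is the claim that face enumeration captures the optimum: the maximum is attained at the sign vector of some face, and every face is realizable by an actual trace-zero polynomial. Both follow from the fact that the faces partition \(T\) and that any point of a face is an admissible polynomial. A minor consequence of including \(+\)/\(-\) symmetry, namely replacing \(p\) by \(-p\), is already handled, since both \(\veca\) and \(-\veca\) lie in \(T\) and appear in the enumeration. The total running time is therefore \(\mathcal{O}(n^{k}\cdot n)\) plus the polynomial construction cost, which is polynomial for fixed \(k\).
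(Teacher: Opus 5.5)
Your proposal is correct and is essentially the paper's own proof. You restrict the sign hyperplanes $\sum_i a_i\theta_j^i=0$ to the trace-zero subspace, which has dimension $k$, as you note. You then enumerate all relatively open faces of every dimension in polynomial time for fixed $k$, and maximize $1+n_-/n_+$ over the faces with $n_+>0$.

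The one loose point is your parenthetical alternative: one point per subset of at most $k$ hyperplanes, perturbed generically. As stated it reaches only one or two faces per chosen flat, so it need not visit every face and could miss the optimal sign vector. You do not need it, though, because the cited arrangement construction already gives a sample point for every face.
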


\begin{proof}
Let
\[
        p(x)=a_0+a_1x+\cdots+a_kx^k,
\]
and identify \(p\) with its coefficient vector
\[
        \veca=(a_0,a_1,\ldots,a_k)\in\mathbb{R}^{k+1}.
\]
For each eigenvalue \(\lambda_j\), the sign of \(p(\lambda_j)\) is
determined by the linear form

\[
        L_j(\veca)=\sum_{i=0}^{k}a_i\lambda_j^i,
        \qquad j=1,\ldots,n.
\]
Hence the sign pattern of \(p\) on the spectrum is determined by the
hyperplanes
\[
        L_j(\veca)=0,
        \qquad j=1,\ldots,n,
\]
in the coefficient space \(\mathbb{R}^{k+1}\).

The trace condition is the linear equation
\[
        \sum_{j=1}^{n}p(\lambda_j)
        =
        \sum_{j=1}^{n}\sum_{i=0}^{k}a_i\lambda_j^i
        =
        0.
\]
Therefore the feasible coefficient vectors lie in a linear subspace of
\(\mathbb{R}^{k+1}\), of dimension at most \(k\). Intersecting the above
hyperplane arrangement with this subspace gives an arrangement of at most
\(n\) hyperplanes in fixed dimension at most \(k\).

Since \(k\) is fixed, this arrangement has polynomially many faces, bounded
by
\[
        \sum_{s=0}^{k}\binom{n}{s}
        =
        \mathcal{O}(n^k).
\]
Moreover, these faces can be enumerated in polynomial time in fixed
dimension. On each relatively open face, the signs in \(\{<,=,>\}\) of all
values \(p(\lambda_i)\) are fixed. Hence both quantities
\[
        n_+(p)=|\{i:p(\lambda_i)>0\}|,
        \qquad
        n_-(p)=|\{i:p(\lambda_i)<0\}|
\]
are constant on that face.

We enumerate all faces of the induced arrangement. For each face, we discard
it if \(n_+(p)=0\), since the objective in
\eqref{eq:poly-inertia-chi-problem} is then not defined. Otherwise, we
compute
\[
        1+\frac{n_-(p)}{n_+(p)}.
\]
Taking the maximum over all remaining faces gives the value of
\(\nu_k(G)\).

Because \(k\) is fixed, the number of faces is polynomial in \(n\), and each
face can be processed in polynomial time. Therefore
\eqref{eq:poly-inertia-chi-problem} can be solved in polynomial time.
\end{proof}

The preceding theorem shows that the optimization of the second
inertia-type bound for \(\chi_k(G)\) is polynomial-time tractable whenever \(k\) is fixed.
This is the same regime as in Theorem~\ref{thm:fixed-k}, and it is the most
natural one in applications where the distance parameter is fixed in
advance.

Finally, we note that the unified MILP
\eqref{eq:newmilp-second-inertial} is not needed for the polynomial-time
argument above. Its role is computational rather than theoretical: it avoids
solving \(n-1\) separate MILPs and lets the optimization select the best
value of \(\ell=n_+(p)\) internally. The hyperplane-arrangement argument
shows that, for fixed \(k\), the same optimum can be found by enumerating
the sign patterns of the polynomial on the spectrum.

\subsubsection{Polynomial time for the cases \(k=1\) and \(k=2\)}
\label{subsec:chi-low-degree}

We now record the two lowest-degree cases for
Problem~\eqref{eq:poly-inertia-chi-problem}. As in the case of
\(\alpha_k(G)\), the case \(k=1\) can be solved directly from the signs of
the eigenvalues. For \(k=2\), the trace condition leaves only one free
coefficient after normalization, so the possible sign patterns can be found
by sorting a set of breakpoints.

\begin{theorem}
\label{thm:second-k1}
For \(k=1\), the value \(\nu_1(G)\) can be computed during the spectral
preprocessing, with no additional cost. In particular, including
spectral preprocessing, the total running time is \(\mathcal{O}(n^3)\).
\end{theorem}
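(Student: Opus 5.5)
For \(k=1\) we write \(p(x)=a_0+a_1x\) and impose the trace condition. Since \(G\) is a graph, \(\tr A=\sum_{i=1}^n\lambda_i=0\), so
\[
        \sum_{i=1}^n p(\lambda_i)=na_0+a_1\sum_{i=1}^n\lambda_i=na_0 .
\]
The trace condition is therefore equivalent to \(a_0=0\), and every admissible polynomial has the form \(p(x)=a_1x\). If \(a_1=0\), then \(n_+(p)=0\) and the polynomial is discarded. So it remains to consider \(a_1>0\) and \(a_1<0\). By positive scaling invariance, these reduce to \(p(x)=x\) and \(p(x)=-x\).

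We then read off the sign counts. For \(p(x)=x\), \(n_+(p)\) is the total multiplicity of the positive eigenvalues and \(n_-(p)\) is that of the negative eigenvalues; for \(p(x)=-x\), the two are interchanged. Writing
\[
        n^{>}=\sum_{\theta_j>0}m_j,
        \qquad
        n^{<}=\sum_{\theta_j<0}m_j,
\]
we obtain
\[
        \nu_1(G)=1+\max\left\{\frac{n^{<}}{n^{>}},\frac{n^{>}}{n^{<}}\right\}.
\]
A ratio is omitted when its denominator is zero. This happens only if \(G\) is edgeless: then all eigenvalues vanish and no admissible polynomial remains. Otherwise both \(n^{>}\) and \(n^{<}\) are positive, because the eigenvalues sum to zero and are not all zero.

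Finally, we account for the cost. The quantities \(n^{>}\) and \(n^{<}\) are accumulated while the sorted eigenvalues are grouped into multiplicities, exactly as in the proof of Theorem~\ref{thm:k1}. They therefore add only \(\mathcal{O}(1)\) work per eigenvalue, which is absorbed into the grouping pass. By Fact~\ref{fact:spectral-preprocessing}, the total running time is \(\mathcal{O}(n^3)\).

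The only subtle point is using \(\tr A=0\) to remove \(a_0\). This does not need \(k\)-partial walk-regularity beyond the fact that \(A\) has zero diagonal. Once \(a_0\) is gone, the problem has no free continuous parameter, and the remaining steps are routine.
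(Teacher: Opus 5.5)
Your proposal is correct and follows the paper's proof essentially step for step. The trace condition together with $\tr A=0$ forces $a_0=0$, the two signs of $a_1$ give the two ratios of positive and negative eigenvalue counts, and these counts are accumulated during the grouping pass of the spectral preprocessing. Your extra remark that both counts are positive unless $G$ is edgeless (since the eigenvalues sum to zero and are not all zero) is a correct sharpening of the paper's proviso ``provided the denominators are nonzero.''
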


\begin{proof}
Let
\[
        p(x)=a_0+a_1x .
\]
The trace condition gives
\[
        \sum_{i=1}^{n}p(\lambda_i)
        =
        n a_0+a_1\sum_{i=1}^{n}\lambda_i
        =
        n a_0
        =
        0,
\]
because
\[
        \sum_{i=1}^{n}\lambda_i=\operatorname{tr}A=0.
\]
Hence \(a_0=0\), and every feasible nonzero polynomial is of the form
\[
        p(x)=a_1x .
\]

If \(a_1>0\), then
\[
        n_+(p)=|\{i:\lambda_i>0\}|,
        \qquad
        n_-(p)=|\{i:\lambda_i<0\}|.
\]
If \(a_1<0\), these two quantities are interchanged. Therefore the optimal
value is
\[
        \nu_1(G)
        =
        1+
        \max
        \left\{
        \frac{|\{i:\lambda_i<0\}|}{|\{i:\lambda_i>0\}|},
        \frac{|\{i:\lambda_i>0\}|}{|\{i:\lambda_i<0\}|}
        \right\},
\]
provided the denominators are nonzero.

The numbers of positive and negative eigenvalues can be counted during the
spectral preprocessing. Hence computing \(\nu_1(G)\) adds no cost
beyond the \(\mathcal{O}(n^3)\) computation of the spectrum.
\end{proof}

We do not consider the zero polynomial in the preceding theorem, since then
\(n_+(p)=n_-(p)=0\), and the ratio in
\eqref{eq:poly-inertia-chi-problem} is not defined.

\begin{theorem}
\label{thm:second-k2}
For \(k=2\), \(\nu_2(G)\) can be computed in
\(\mathcal{O}(n\log n)\) time after the spectral preprocessing.
\end{theorem}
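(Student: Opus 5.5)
For \(k=2\) we write \(p(x)=a_0+a_1x+a_2x^2\) and first remove the trace condition. Using \(\sum_i\lambda_i=0\) and \(\sum_i\lambda_i^2=\tr A^2=2|E|\), the condition \(\sum_i p(\lambda_i)=0\) becomes
\[
n a_0+2|E|\,a_2=0,\qquad\text{i.e.}\qquad a_0=-\tfrac{2|E|}{n}a_2=-\bar d\,a_2,
\]
where \(\bar d\) is the average degree. The feasible polynomials are therefore exactly
\[
p(x)=a_1x+a_2(x^2-\bar d),\qquad (a_1,a_2)\in\RR^2 .
\]
The sign pattern of \(p\) on the spectrum is invariant under positive scaling, so it suffices to look at directions \((a_1,a_2)=(\cos\phi,\sin\phi)\), \(\phi\in[0,2\pi)\). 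For each distinct eigenvalue \(\theta_j\) the value \(p(\theta_j)=\cos\phi\,\theta_j+\sin\phi\,(\theta_j^2-\bar d)\) is a linear form in \((a_1,a_2)\). It vanishes along a line through the origin (or identically, when \(\theta_j=0\) and \(\theta_j^2=\bar d\), i.e.\ only in the degenerate case \(\bar d=0\), which gives the empty graph and can be handled trivially). So on the circle there are at most \(2(d+1)\) breakpoint angles \(\phi_j,\phi_j+\pi\), where \(\phi_j\) solves \(\tan\phi=-\theta_j/(\theta_j^2-\bar d)\). This is the two-dimensional instance of the arrangement used in Theorem~\ref{thm:second-fixed-k}, restricted to the trace subspace, and the faces are the breakpoint rays and the open arcs between them.

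The algorithm has four steps.
\begin{enumerate}
\item Compute all breakpoints in \(\mathcal{O}(n)\) time and sort them by angle in \(\mathcal{O}(n\log n)\) time. Eigenvalues sharing the same breakpoint are grouped together.
\item Fix an initial direction strictly inside some arc and compute \(n_+(p)\) and \(n_-(p)\) directly in \(\mathcal{O}(n)\) time.
\item Sweep \(\phi\) once around the circle. When the sweep reaches a breakpoint, the eigenvalues attached to it move from \(>\) to \(=\) (on the ray) and then to \(<\), or the reverse. Which way they move is read off from the sign of the derivative \(-\sin\phi\,\theta_j+\cos\phi\,(\theta_j^2-\bar d)\) at the crossing. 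Update \(n_\pm\) by the corresponding multiplicities, once for the ray face and once for the following arc.
\item At every face with \(n_+>0\), record \(1+n_-/n_+\), and output the maximum.
\end{enumerate}

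The ray faces have to be included because eigenvalues with \(p(\theta_j)=0\) count in neither \(n_+\) nor \(n_-\). Deleting them from the denominator can raise the ratio, so a ray face can beat both neighbouring arcs. Each breakpoint is processed in amortized constant time (grouped breakpoints cost their group size), so the sweep takes \(\mathcal{O}(n)\) and sorting dominates. The total is \(\mathcal{O}(n\log n)\) after spectral preprocessing. Correctness follows as in Theorem~\ref{thm:second-fixed-k}: \(n_\pm\) is constant on each face, the faces of the arrangement inside the trace plane are exactly the arcs and rays visited, and replacing \(p\) by \(-p\) (the antipodal direction) is covered automatically, so both ratios in the bound are considered.

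The step I expect to need the most care is the bookkeeping at breakpoints. It must correctly handle several eigenvalues vanishing at the same angle and the orientation of each sign change. It must also handle eigenvalues with \(\theta_j^2=\bar d\), whose breakpoint lies at \(a_1=0\), and the exclusion of faces with \(n_+=0\). To avoid floating-point angles, I would first try comparing breakpoints exactly via the ratio \(-\theta_j/(\theta_j^2-\bar d)\), treated as a slope, together with a half-plane tag. That turns the sort into a comparison of rational expressions in the eigenvalues.
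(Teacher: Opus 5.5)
Your proposal is correct and follows essentially the paper's argument: eliminate $a_0$ via the trace identity $a_0=-\frac{2|E(G)|}{n}a_2$, reduce by positive scaling to a one-parameter family, and sort the $\mathcal{O}(n)$ breakpoints where some $p(\theta_j)$ vanishes. You then sweep them, evaluating the breakpoint faces as well as the open intervals. The only difference is cosmetic: sweeping the full circle of directions $(a_1,a_2)$ covers the case $a_2=0$ and the sign flip $p\mapsto -p$ automatically, whereas the paper normalizes $a_2=1$, reduces $a_2=0$ to Theorem~\ref{thm:second-k1}, and takes the maximum of both ratios.
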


\begin{proof}
Let
\[
        p(x)=a_0+a_1x+a_2x^2 .
\]
The trace condition gives
\[
\begin{aligned}
        0
        &=
        \sum_{i=1}^{n}p(\lambda_i)  \\
        &=
        n a_0
        +
        a_1\sum_{i=1}^{n}\lambda_i
        +
        a_2\sum_{i=1}^{n}\lambda_i^2 .
\end{aligned}
\]
Since
\[
        \sum_{i=1}^{n}\lambda_i=\operatorname{tr}A=0,
        \qquad
        \sum_{i=1}^{n}\lambda_i^2=\operatorname{tr}A^2=2|E(G)|,
\]
we obtain
\[
        a_0=-\frac{2|E(G)|}{n}a_2 .
\]

If \(a_2=0\), then \(a_0=0\), and the problem reduces to the case
\(k=1\), already treated in Theorem~\ref{thm:second-k1}. We therefore assume
that \(a_2\neq0\).

Multiplying \(p\) by a positive constant does not change its sign pattern on
the spectrum. Hence, for polynomials with \(a_2>0\), we may normalize
\(a_2=1\). Thus it is enough to consider
\[
        p_a(x)=x^2+a x-\frac{2|E(G)|}{n},
        \qquad a\in\mathbb{R},
\]
together with their negatives. The negative polynomial \(-p_a\) interchanges
\(n_+(p_a)\) and \(n_-(p_a)\), so considering both signs recovers all
quadratic polynomials with \(a_2\neq0\).

For a fixed nonzero eigenvalue \(\theta_j\), the sign of
\(p_a(\theta_j)\) changes only when
\[
        p_a(\theta_j)=0.
\]
Equivalently,
\[
        a
        =
        \frac{\frac{2|E(G)|}{n}-\theta_j^2}{\theta_j}.
\]
Thus the sign pattern of \(p_a\) on the spectrum is constant on each
interval determined by the breakpoint set
\[
        B=
        \left\{
        \frac{\frac{2|E(G)|}{n}-\theta_j^2}{\theta_j}
        :
        \theta_j\neq0
        \right\}.
\]
Eigenvalues equal to zero do not create breakpoints, since
\(p_a(0)=-2|E(G)|/n\) is independent of \(a\).

It remains to evaluate the objective on each interval between two
consecutive breakpoints, and also at the breakpoints themselves. The latter
must be tested separately because \(n_+(p)\) and \(n_-(p)\) use strict
inequalities, so eigenvalues mapped to zero are counted in neither
quantity.

There are at most \(d+1\leq n\) breakpoints. Sorting them takes
\(\mathcal{O}(n\log n)\) time. After sorting, the sign counts
\[
        n_+(p_a)
        =
        \sum_{\substack{0\leq j\leq d\\ p_a(\theta_j)>0}}m_j,
        \qquad
        n_-(p_a)
        =
        \sum_{\substack{0\leq j\leq d\\ p_a(\theta_j)<0}}m_j
\]
can be updated by a linear scan through the sorted breakpoints, grouping
breakpoints that coincide.

For each sign pattern, we evaluate
\[
        1+
        \max
        \left\{
        \frac{n_-(p_a)}{n_+(p_a)},
        \frac{n_+(p_a)}{n_-(p_a)}
        \right\},
\]
whenever both denominators are nonzero. This accounts for both \(p_a\) and
\(-p_a\). Taking the maximum over all intervals and breakpoint values gives
\(\nu_2(G)\).

Therefore, after spectral preprocessing, the total running time is dominated
by sorting the breakpoint set \(B\), namely \(\mathcal{O}(n\log n)\).
\end{proof}

Thus, as for the bound on \(\alpha_k(G)\), the second inertia-type bound on
\(\chi_k(G)\) is polynomial-time optimizable for fixed \(k\), with more
explicit algorithms described in theorems~\ref{thm:second-k1} and~\ref{thm:second-k2} for \(k=1\) and \(k=2\).

\section{Concluding remarks}\label{sec:concludingremarks}

In this paper, we studied the optimization and complexity of inertia-type
bounds for the \(k\)-independence number and the distance-\(k\) chromatic
number of a graph. In Section~\ref{sec:model-improvements}, we refined the
MILP formulations used to compute these bounds: Lemma~\ref{lem:milp-equivalence}
replaces the vertex-dependent family of MILPs for \(\alpha_k(G)\) by a single
equivalent formulation, Theorem~\ref{thm:redundant-degree-two} removes
redundant quadratic closed-walk constraints, and
Lemma~\ref{lem:second-milp-equivalence} gives a unified formulation for the
second inertia-type bound on \(\chi_k(G)\). These changes preserve the
bounds while significantly reducing the computational burden. In
Section~\ref{polynomial algorithm}, we further showed that the corresponding
optimization problems are polynomial-time solvable for every fixed \(k\);
see Theorems~\ref{thm:fixed-k} and~\ref{thm:second-fixed-k}. For
\(k=1\) and \(k=2\), the more explicit algorithms in
Theorems~\ref{thm:k1}, \ref{thm:k2}, \ref{thm:second-k1},
and~\ref{thm:second-k2} show that the low-degree cases can be handled even
more directly. Altogether, these results make inertia-type bounds more
practical to compute and clarify why their optimization is tractable in the
fixed-distance regime relevant to applications such as coding theory.

Several questions remain open. The fixed-\(k\) algorithms in
Theorems~\ref{thm:fixed-k} and~\ref{thm:second-fixed-k} rely on a general
hyperplane-arrangement argument, but for \(k\geq3\) it would be interesting
to find more explicit algorithms, analogous to the results obtained here for
\(k=1\) and \(k=2\). A separate question concerns the case where \(k\) is not
fixed but is part of the input. Then the dimension of the polynomial
coefficient space grows with \(k\), so the argument of
Section~\ref{polynomial algorithm} no longer directly yields a polynomial-time
algorithm. Understanding the complexity of the optimization problems in this
variable-\(k\) regime is a natural direction for future research.

\subsection*{Acknowledgements}
Aida Abiad is supported by the Dutch Research Council (NWO) through the grant VI.Vidi.213.085.

\bibliographystyle{plain}
\bibliography{ref}

\newpage
\appendix

\section{Computational details and full MILP results}
\label{app:milp-tables}

This appendix reports the full instance-by-instance computational results
for the MILP reformulations discussed in Section~\ref{sec:model-improvements}.
The benchmark set consists of the graphs used by \cite{acfns2020} in their
study of eigenvalue bounds for graph powers.

All computations were performed in Python 3.10 inside a SageMath 9.7
notebook, using \texttt{NetworkX} for graph handling and Gurobi as the MILP
solver. The experiments were run under Ubuntu 22.04 LTS on an
\textsc{HP ENVY x360 Convertible 13-AR0016NF} laptop with an AMD Ryzen 7
3700U CPU and 8 GB of RAM. The implementation is available at
\href{https://github.com/Valentin-Michaux/Optimization-of-inertia-type-bounds-on-the-independence-and-chromatic-numbers-of-graph-powers}
{the accompanying GitHub repository}.

Using the ratios of average running times, MILP~\ref{eq:newmilp} is approximately
\(17.5\times\) faster than MILP~\ref{MILP:ind1:inertia} for \(k=2\) and \(25.2\times\)
faster for \(k=3\). Similarly, MILP~\ref{eq:newmilp-second-inertial} is approximately
\(4.2\times\) faster than MILP~\ref{MILP:ind1:inertia2} for \(k=2\) and \(3.9\times\)
faster for \(k=3\). We do note that these results are obtained with small to moderate sized graphs. The differences in running times will increase and become even more significant as the size of the graphs becomes larger.

Tables~\ref{tab:milp_times_k2} and~\ref{tab:milp_times_k3} compare the
original vertex-dependent formulation with the aggregated formulation for
\(k=2\) and \(k=3\), respectively. In all instances, the two formulations
produce the same objective value, as guaranteed by
Lemma~\ref{lem:milp-equivalence} and Lemma~\ref{lem:second-milp-equivalence}. The difference is computational: the
aggregated formulation requires only one MILP solve per graph, instead of
one solve for each distinguished vertex.

\begin{table}[p]
\centering
\footnotesize
\begin{adjustbox}{max width=\textwidth,max height=\textheight,center}
  \begin{tabular}{lrrrrrrr}
  \toprule
  Graph & $\alpha_2$ & MILP~\ref{MILP:ind1:inertia}/~\ref{eq:newmilp} bound & MILP~\ref{MILP:ind1:inertia} time (s) & MILP~\ref{eq:newmilp} time (s) & MILP~\ref{MILP:ind1:inertia2}/~\ref{eq:newmilp-second-inertial} bound & MILP~\ref{MILP:ind1:inertia2} time (s) & MILP~\ref{eq:newmilp-second-inertial} time (s) \\
  \midrule
    Balaban 10-cage               & 17 &   19 &  1.030 &  0.080 &   19 &  1.930 &  0.390 \\
    Frucht graph                  &  3 &   3* &  0.120 &  0.040 &   3* &  0.210 &  0.070 \\
    Meredith Graph                & 10 &  10* &  0.860 &  0.080 &  10* &  0.590 &  0.110 \\
    Moebius-Kantor Graph          &  4 &    6 &  0.110 &  0.010 &   4* &  0.090 &  0.010 \\
    Bidiakis cube                 &  2 &    4 &  0.100 &  0.010 &    3 &  0.080 &  0.020 \\
    Gosset Graph                  &  2 &    8 &  0.400 &  0.030 &   2* &  0.110 &  0.010 \\
    Gray graph                    & 11 &   19 &  0.380 &  0.030 &   18 &  0.190 &  0.050 \\
    Nauru Graph                   &  6 &    8 &  0.230 &  0.020 &    8 &  0.190 &  0.040 \\
    Blanusa First Snark Graph     &  4 &   4* &  0.230 &  0.020 &   4* &  0.330 &  0.080 \\
    Pappus Graph                  &  3 &    7 &  0.060 &  0.010 &    6 &  0.040 &  0.020 \\
    Blanusa Second Snark Graph    &  4 &   4* &  0.290 &  0.020 &   4* &  0.450 &  0.210 \\
    Poussin Graph                 &  2 &    4 &  0.190 &  0.030 &   -- &     -- &     -- \\
    Brinkmann graph               &  3 &    6 &  0.400 &  0.030 &    6 &  0.370 &  0.160 \\
    Harborth Graph                & 10 &   13 &  4.480 &  0.110 &   13 &  5.090 &  1.170 \\
    Perkel Graph                  &  5 &   18 &  0.230 &  0.030 &   17 &  0.100 &  0.010 \\
    Harries Graph                 & 17 &   18 &  1.570 &  0.100 &  17* &  3.810 &  0.430 \\
    Bucky Ball                    & 12 &   16 &  1.060 &  0.050 &   16 &  1.430 &  0.470 \\
    Harries-Wong graph            & 17 &   18 &  1.600 &  0.070 &  17* &  2.550 &  0.410 \\
    Robertson Graph               &  3 &    6 &  0.290 &  0.030 &    5 &  0.370 &  0.190 \\
    Heawood graph                 &  2 &   2* &  0.130 &  0.010 &   2* &  0.050 &  0.020 \\
    Herschel graph                &  2 &    3 &  0.080 &  0.020 &   -- &     -- &     -- \\
    Hoffman Graph                 &  2 &    5 &  0.050 &  0.010 &    4 &  0.040 &  0.010 \\
    Sousselier Graph              &  3 &    5 &  0.160 &  0.020 &   -- &     -- &     -- \\
    Sylvester Graph               &  6 &   10 &  0.260 &  0.020 &   10 &  0.060 &  0.020 \\
    Coxeter Graph                 &  7 &   7* &  0.200 &  0.020 &   7* &  0.080 &  0.020 \\
    Holt graph                    &  3 &    7 &  0.230 &  0.010 &    7 &  0.090 &  0.020 \\
    Szekeres Snark Graph          &  9 &   13 &  0.660 &  0.030 &   13 &  0.460 &  0.160 \\
    Desargues Graph               &  4 &    6 &  0.140 &  0.010 &    6 &  0.090 &  0.030 \\
    Horton Graph                  & 24 &   30 &  2.950 &  0.180 &   30 &  4.950 &  0.360 \\
    Kittell Graph                 &  3 &    5 &  0.330 &  0.030 &   -- &     -- &     -- \\
    Tietze Graph                  &  3 &    4 &  0.090 &  0.010 &   3* &  0.110 &  0.020 \\
    Double star snark             &  6 &    9 &  0.500 &  0.020 &    9 &  0.430 &  0.200 \\
    Krackhardt Kite Graph         &  2 &    4 &  0.080 &  0.020 &   -- &     -- &     -- \\
    Durer graph                   &  2 &    3 &  0.210 &  0.020 &    3 &  0.120 &  0.030 \\
    Klein 3-regular Graph         & 12 &   19 &  0.720 &  0.030 &   18 &  0.530 &  0.210 \\
    Truncated Tetrahedron         &  3 &    4 &  0.080 &  0.010 &    4 &  0.040 &  0.020 \\
    Dyck graph                    &  8 &   8* &  0.280 &  0.020 &   8* &  0.150 &  0.040 \\
    Klein 7-regular Graph         &  3 &    9 &  0.090 &  0.010 &   3* &  0.040 &  0.010 \\
    Ellingham-Horton 54-graph     & 11 &   20 &  3.060 &  0.070 &   20 &  3.090 &  0.960 \\
    Tutte-Coxeter graph           &  6 &   10 &  0.120 &  0.010 &   10 &  0.070 &  0.020 \\
    Ellingham-Horton 78-graph     & 18 &   27 &  7.630 &  0.230 &   26 &  7.590 &  1.970 \\
    Tutte Graph                   & 10 &   13 &  2.910 &  0.080 &   13 &  3.440 &  0.980 \\
    Errera graph                  &  2 &    4 &  0.160 &  0.030 &   -- &     -- &     -- \\
    F26A Graph                    &  6 &    7 &  0.410 &  0.010 &   6* &  0.130 &  0.030 \\
    Watkins Snark Graph           &  9 &   13 &  0.050 &  0.040 &   13 &  1.700 &  0.460 \\
    Flower Snark                  &  5 &    7 &  0.270 &  0.020 &    7 &  0.370 &  0.210 \\
    Markstroem Graph              &  6 &    7 &  0.460 &  0.050 &    7 &  0.620 &  0.220 \\
    Wells graph                   &  2 &    9 &  0.220 &  0.020 &    8 &  0.070 &  0.020 \\
    Folkman Graph                 &  3 &    5 &  0.070 &  0.010 &    5 &  0.050 &  0.010 \\
    Wiener-Araya Graph            &  8 &   12 &  1.110 &  0.160 &   -- &     -- &     -- \\
    Foster Graph                  & 21 &   23 &  1.190 &  0.140 &   23 &  0.780 &  0.220 \\
    McGee graph                   &  5 &    7 &  0.340 &  0.040 &    6 &  0.270 &  0.140 \\
    Franklin graph                &  2 &    4 &  0.090 &  0.010 &    3 &  0.070 &  0.010 \\
    Hexahedron                    &  2 &   2* &  0.060 &  0.010 &   2* &  0.020 &  0.010 \\
    Dodecahedron                  &  4 &   4* &  0.170 &  0.010 &   4* &  0.090 &  0.020 \\
    Icosahedron                   &  2 &    4 &  0.030 &  0.010 &   2* &  0.020 &  0.010 \\
  \midrule
    \textbf{Average}              & -- & -- & \textbf{0.700} & \textbf{0.040} & -- & \textbf{0.889} & \textbf{0.210} \\
  \bottomrule
  \end{tabular}
\end{adjustbox}
\caption{Running times and bounds for $k=2$. Asterisks indicate bounds equal to $\alpha_2$.}
\label{tab:milp_times_k2}
\end{table}

\vspace{1em}

\begin{table}[p]
\centering
\footnotesize
\begin{adjustbox}{max width=\textwidth,max height=\textheight,center}
  \begin{tabular}{lrrrrrrr}
  \toprule
  Graph & $\alpha_3$ & MILP~\ref{MILP:ind1:inertia}/~\ref{eq:newmilp} bound & MILP~\ref{MILP:ind1:inertia} time (s) & MILP~\ref{eq:newmilp} time (s) & MILP~\ref{MILP:ind1:inertia2}/~\ref{eq:newmilp-second-inertial} bound & MILP~\ref{MILP:ind1:inertia2} time (s) & MILP~\ref{eq:newmilp-second-inertial} time (s) \\
  \midrule
    Balaban 10-cage               &  9 &   16 &  1.830 &  0.100 &   16 &  2.810 &  0.330 \\
    Frucht graph                  &  2 &    3 &  0.240 &  0.080 &   -- &     -- &     -- \\
    Meredith Graph                &  7 &   10 &  1.090 &  0.070 &   10 &  1.590 &  0.270 \\
    Moebius-Kantor Graph          &  2 &    5 &  0.110 &  0.020 &   2* &  0.130 &  0.010 \\
    Bidiakis cube                 &  1 &    2 &  0.120 &  0.010 &    2 &  0.150 &  0.040 \\
    Gosset Graph                  &  1 &   1* &  0.470 &  0.030 &   1* &  0.140 &  0.010 \\
    Gray graph                    &  9 &   19 &  0.580 &  0.030 &   12 &  0.330 &  0.040 \\
    Nauru Graph                   &  4 &    7 &  0.330 &  0.020 &    7 &  0.290 &  0.050 \\
    Blanusa First Snark Graph     &  2 &    4 &  0.340 &  0.030 &    4 &  0.330 &  0.170 \\
    Pappus Graph                  &  3 &    7 &  0.130 &  0.010 &   3* &  0.090 &  0.010 \\
    Blanusa Second Snark Graph    &  2 &    3 &  0.350 &  0.020 &    3 &  0.430 &  0.140 \\
    Poussin Graph                 &  1 &   1* &  0.210 &  0.010 &   -- &     -- &     -- \\
    Brinkmann graph               &  1 &    2 &  0.270 &  0.020 &   1* &  0.470 &  0.030 \\
    Harborth Graph                &  6 &   12 &  5.560 &  0.200 &    8 &  4.170 &  1.110 \\
    Perkel Graph                  &  1 &   1* &  0.500 &  0.030 &   1* &  0.120 &  0.010 \\
    Harries Graph                 & 10 &   16 &  3.960 &  0.110 &   16 &  2.960 & 0.510 \\
    Bucky Ball                    &  7 &   11 &  1.230 &  0.040 &   10 &  1.390 &  0.220 \\
    Harries-Wong graph            &  9 &   16 &  2.860 &  0.130 &   16 &  3.020 &  0.560 \\
    Robertson Graph               &  1 &   1* &  0.220 &  0.020 &   1* &  0.400 &  0.020 \\
    Heawood graph                 &  1 &   1* &  0.090 &  0.010 &   1* &  0.050 &  0.010 \\
    Herschel graph                &  2 &   2* &  0.100 &  0.010 &   -- &     -- &     -- \\
    Hoffman Graph                 &  2 &    5 &  0.110 &  0.010 &   2* &  0.070 &  0.010 \\
    Sousselier Graph              &  1 &    3 &  0.260 &  0.020 &   -- &     -- &     -- \\
    Sylvester Graph               &  1 &   1* &  0.290 &  0.020 &   1* &  0.110 &  0.010 \\
    Coxeter Graph                 &  4 &    6 &  0.190 &  0.020 &    5 &  0.130 &  0.040 \\
    Holt graph                    &  1 &    5 &  0.190 &  0.020 &    5 &  0.190 &  0.040 \\
    Szekeres Snark Graph          &  6 &    9 &  0.860 &  0.030 &    8 &  0.630 &  0.230 \\
    Desargues Graph               &  2 &    5 &  0.170 &  0.010 &    5 &  0.150 &  0.050 \\
    Horton Graph                  & 14 &   25 &  9.510 &  0.230 &   18 &  7.470 &  1.370 \\
    Kittell Graph                 &  2 &   2* &  0.690 &  0.020 &   -- &     -- &     -- \\
    Tietze Graph                  &  1 &    2 &  0.110 &  0.010 &   -- &     -- &     -- \\
    Double star snark             &  4 &    6 &  0.480 &  0.030 &    6 &  0.530 &  0.180 \\
    Krackhardt Kite Graph         &  2 &    3 &  0.190 &  0.020 &   -- &     -- &     -- \\
    Durer graph                   &  2 &   2* &  0.170 &  0.010 &   -- &     -- &     -- \\
    Klein 3-regular Graph         &  7 &    9 &  0.790 &  0.030 &    9 &  0.620 &  0.340 \\
    Truncated Tetrahedron         &  1 &   1* &  0.090 &  0.010 &   1* &  0.080 &  0.010 \\
    Dyck graph                    &  4 &    7 &  0.260 &  0.020 &    6 &  0.190 &  0.040 \\
    Klein 7-regular Graph         &  1 &   1* &  0.240 &  0.010 &   1* &  0.080 &  0.010 \\
    Ellingham-Horton 54-graph     &  8 &   16 &  6.440 &  0.180 &   11 &  5.440 &  0.930 \\
    Tutte-Coxeter graph           &  5 &   10 &  0.210 &  0.010 &   10 &  0.090 &  0.030 \\
    Ellingham-Horton 78-graph     & 11 &   19 & 12.780 &  0.310 &   15 & 14.170 &  6.750 \\
    Tutte Graph                   &  6 &   10 &  3.250 &  0.090 &   10 &  2.760 &  0.960 \\
    Errera graph                  &  2 &   2* &  0.200 &  0.010 &   -- &     -- &     -- \\
    F26A Graph                    &  3 &    7 &  0.290 &  0.020 &    6 &  0.140 &  0.040 \\
    Watkins Snark Graph           &  6 &    9 &  1.490 &  0.060 &    8 &  1.630 &  0.410 \\
    Flower Snark                  &  2 &    3 &  0.300 &  0.020 &   2* &  0.420 &  0.060 \\
    Markstroem Graph              &  3 &    6 &  0.580 &  0.040 &   -- &     -- &     -- \\
    Wells graph                   &  2 &    9 &  0.240 &  0.020 &   2* &  0.100 &  0.010 \\
    Folkman Graph                 &  2 &    5 &  0.150 &  0.010 &    5 &  0.080 &  0.040 \\
    Wiener-Araya Graph            &  5 &    8 &  4.030 &  0.170 &   -- &     -- &     -- \\
    Foster Graph                  & 15 &   22 &  1.810 &  0.150 &   21 &  1.240 &  0.220 \\
    McGee graph                   &  2 &    4 &  0.430 &  0.020 &    4 &  0.450 &  0.060 \\
    Franklin graph                &  1 &    3 &  0.100 &  0.010 &   1* &  0.110 &  0.010 \\
    Hexahedron                    &  1 &   1* &  0.050 &  0.010 &   1* &  0.060 &  0.010 \\
    Dodecahedron                  &  2 &    4 &  0.180 &  0.020 &    4 &  0.170 &  0.030 \\
    Icosahedron                   &  1 &   1* &  0.030 &  0.010 &   1* &  0.050 &  0.010 \\
  \midrule
    \textbf{Average}              & -- & -- & \textbf{1.210} & \textbf{0.048} & -- & \textbf{1.245} & \textbf{0.320} \\
  \bottomrule
  \end{tabular}
\end{adjustbox}
\caption{Running times and bounds for $k=3$. Asterisks indicate bounds equal to $\alpha_3$.}
\label{tab:milp_times_k3}
\end{table}
\newpage
\section{A nonredundant family of closed-walk constraints for \(k=3\)}
\label{app:nonredundant-k3-example}

This appendix gives an explicit example showing that the redundancy result
for quadratic polynomials in Theorem~\ref{thm:redundant-degree-two} does not extend directly to \(k=3\).

Let \(G\) be the graph on vertex set \(\{1,\ldots,7\}\) with edge set
\[
\begin{aligned}
E(G)=\{&
\{1,2\},\{1,6\},\{2,3\},\{2,5\},\{2,7\},\\
&\{3,4\},\{3,5\},\{4,5\},\{5,6\},\{6,7\}\}.
\end{aligned}
\]
For a polynomial
\[
        p(x)=a_0+a_1x+a_2x^2+a_3x^3,
\]
the diagonal entry \((p(A))_{vv}\) is
\[
        (p(A))_{vv}
        =
        a_0+a_2(A^2)_{vv}+a_3(A^3)_{vv}.
\]
Since \((A^2)_{vv}=d(v)\) and \((A^3)_{vv}=2t(v)\), where \(d(v)\) is the
degree of \(v\) and \(t(v)\) is the number of triangles containing \(v\), the
closed-walk constraint at \(v\) becomes
\[
        a_0+d(v)a_2+2t(v)a_3\geq0.
\]

For the graph above, the pairs \((d(v),2t(v))\) are:
\[
\begin{array}{c|c}
v & (d(v),2t(v))\\
\hline
1 & (2,0)\\
2 & (4,2)\\
3 & (3,4)\\
4 & (2,2)\\
5 & (4,4)\\
6 & (3,0)\\
7 & (2,0).
\end{array}
\]
Thus the distinct closed-walk constraints are
\[
        a_0+2a_2\geq0,
\]
\[
        a_0+4a_2+2a_3\geq0,
\]
\[
        a_0+3a_2+4a_3\geq0,
\]
\[
        a_0+2a_2+2a_3\geq0,
\]
\[
        a_0+4a_2+4a_3\geq0,
\]
and
\[
        a_0+3a_2\geq0.
\]

The corresponding coefficient points in the \((a_2,a_3)\)-plane are
\[
        (2,0),\ (3,0),\ (4,2),\ (4,4),\ (3,4),\ (2,2).
\]
These six points are the vertices of a convex hexagon. Therefore none of
the associated halfspaces is implied by the other five: each constraint
defines a supporting halfspace of the feasible region.

Consequently, for \(k=3\), even after identifying duplicate constraints,
one may still have \(n-1\) nonredundant closed-walk constraints. This shows
that the degree-two reduction from Theorem~\ref{thm:redundant-degree-two}
does not extend in general to higher degrees.

\newpage
\section{Linear-time procedures for the quadratic case}
\label{app:quadratic-alpha-algorithms}

This appendix gives the two-pointer procedures used in the proof of
Theorem~\ref{thm:k2}. Throughout, the distinct eigenvalues are ordered as
\[
        \theta_0>\theta_1>\cdots>\theta_d,
\]
with multiplicities \(m_0,\ldots,m_d\). We use the prefix sums
\[
        S_0=0,
        \qquad
        S_t=\sum_{s<t}m_s,
        \qquad t=1,\ldots,d+1.
\]
Thus the total multiplicity of a consecutive block of eigenvalues can be
computed in constant time.

The algorithms below also test endpoint perturbations. This is necessary
because an eigenvalue lying exactly at a root is counted as nonnegative,
whereas an eigenvalue lying strictly between the two roots is not.

\begin{algorithm}
\caption{MaxInteriorWeight \((a_2>0)\)}
\label{alg:epsOnFly}
\begin{algorithmic}[1]
\Require strictly decreasing eigenvalues
\(\theta_0,\ldots,\theta_d\), multiplicities
\(m_0,\ldots,m_d\), and threshold \(K=-d_{\min}\)
\Ensure optimal roots \((\alpha^\star,\beta^\star)\)

\State \(S_0\gets 0\)
\For{\(t=1\) to \(d+1\)}
    \State \(S_t\gets S_{t-1}+m_{t-1}\)
\EndFor

\State \(i\gets 0\), \(j\gets 0\)
\State \(\mathrm{best}\gets -\infty\)

\While{\(j<d+1\)}
    \If{\(\theta_i\theta_j\geq K\)}
        \State \(j\gets j+1\)
    \Else
        \If{\(j-i>1\)}
            \State compute, using the prefix sums, the four candidate
            intervals obtained by placing each endpoint either on the
            corresponding eigenvalue or in the adjacent spectral gap
            \State keep the feasible candidate of largest interior weight
        \EndIf
        \State \(i\gets i+1\)
    \EndIf
\EndWhile

\State apply the same endpoint test to the last feasible interval
\State \Return \((\alpha^\star,\beta^\star)\)
\end{algorithmic}
\end{algorithm}

Algorithm~\ref{alg:epsOnFly} applies to the case \(a_2>0\), where the
objective is minimized by maximizing the total multiplicity of eigenvalues
strictly between the roots. The threshold is \(K=-d_{\min}\), and the
feasibility condition is
\[
        \alpha\beta\geq K.
\]
Both indices move monotonically, so the scan is linear in the number of
distinct eigenvalues.

\begin{algorithm}
\caption{MinInteriorWeight \((a_2<0)\)}
\label{alg:minInteriorWeight}
\begin{algorithmic}[1]
\Require strictly decreasing eigenvalues
\(\theta_0,\ldots,\theta_d\), multiplicities
\(m_0,\ldots,m_d\), and threshold \(K=-d_{\max}\)
\Ensure optimal roots \((\alpha^\star,\beta^\star)\)

\State \(S_0\gets 0\)
\For{\(t=1\) to \(d+1\)}
    \State \(S_t\gets S_{t-1}+m_{t-1}\)
\EndFor

\State \(r\gets \min\{t:\theta_t<0\}\)
\State \(j\gets r\)
\State \(\mathrm{best}\gets +\infty\)

\For{\(i=0\) to \(r-1\)}
    \While{\(j\leq d\) and \(\theta_i\theta_j>K\)}
        \State \(j\gets j+1\)
    \EndWhile

    \If{\(j>d\)}
        \State \textbf{break}
    \EndIf

    \State compute, using the prefix sums, the four candidate intervals
    obtained by placing each endpoint either on the corresponding
    eigenvalue or in the adjacent spectral gap
    \State keep the feasible candidate of smallest interior weight
\EndFor

\State \Return \((\alpha^\star,\beta^\star)\)
\end{algorithmic}
\end{algorithm}

Algorithm~\ref{alg:minInteriorWeight} applies to the case \(a_2<0\), where
one minimizes the total multiplicity of eigenvalues lying inside the
interval \((\alpha,\beta)\). The threshold is \(K=-d_{\max}\), and the
feasibility condition is
\[
        \alpha\beta\leq K.
\]
The outer loop scans the positive eigenvalues, while the pointer \(j\)
moves monotonically through the negative eigenvalues. Hence each index is
visited at most once, and the running time is \(\mathcal{O}(n)\).

\end{document}